\newtheorem{theorem}{Theorem}
\newtheorem{corollary}[theorem]{Corollary}
\newtheorem{definition}{Definition}
\newtheorem{example}{Example}
\newtheorem{lemma}[theorem]{Lemma}
\newtheorem{proposition}[theorem]{Proposition}
\newtheorem{remark}{Remark}
\begin{document}

\title[Multi-anisotropic Gevrey regularity]
 {Multi-anisotropic Gevrey regularity\\of hypoelliptic operators}
\author[C. Bouzar ]{Chikh Bouzar}

\address{%
Oran-Essenia University, Algeria}

\email{bouzar@yahoo.com}

\thanks{}

\author[A. Dali]{Ahmed Dali}
\address{
University of Bechar, Algeria}
 \email{Ahmedalimat@yahoo.fr}

\subjclass{Primary 35H10; Secondary 35D10, 35H30}

\keywords{Hypoelliptic operators, Gevrey regularity,
Multi-anisotropic Gevrey spaces, Newton polyhedron,
Multi-quasiellipticity, Gevrey vectors }

\date{}

\begin{abstract}
We show a multi-anisotropic Gevrey regularity of solutions of
hypoelliptic equations. This result is a precision of a classical
result of H\"{o}rmander
\end{abstract}

\maketitle

\section{Introduction}

An important problem among others of linear partial differential
equations is the $\mathcal{C}^{\infty }$ or Gevrey regularity of
solutions of these
equations. L. H\"{o}rmander has completely characterized the $\mathcal{C}%
^{\infty }$ regularity (hypoellipticity) of linear partial
differential operators with complex constant coefficients, see
\cite{HOR2}. An another fundamental result obtained by L.
H\"{o}rmander says that every hypoelliptic differential operator
$P\left( D\right) $ is anisotropic Gevrey hypoelliptic, i.e.
$\exists \varrho =\left( \varrho _{1},....,\varrho _{n}\right) \in
\mathbb{R}_{+}^{n}$ such that
\begin{equation*}
\text{ }u\in \mathcal{D}^{\prime }\left( \Omega \right) \text{ }and\text{ }%
P(D)u=0\Longrightarrow \text{ }u\in G^{\varrho }\left( \Omega
\right) ,
\end{equation*}%
where $G^{\varrho }\left( \Omega \right) $ is an anisotropic
Gevrey space associated with $P\left( D\right) .$

A large class of hypoelliptic differential operators is the class
of
multi-quasielliptic differential operators, see V. P. Mikha\"{\i}lov \cite%
{MIKH}, J. Friberg \cite{FRIB} and S. G. Gindikin, L. R. Volevich
\cite{GV}.

L. Zanghirati \cite{ZANG}, proved that multi-quasielliptic
differential
operators are multi-anisotropic Gevrey hypoelliptic, i.e.%
\begin{equation*}
\text{ }u\in \mathcal{D}^{\prime }\left( \Omega \right) \text{ }and\text{ }%
P(D)u\in G^{s,\text{ }\Gamma }\left( \Omega \right) \text{
}\Longrightarrow \text{ }u\in G^{s,\text{ }\Gamma }\left( \Omega
\right) ,
\end{equation*}%
where $G^{s,\text{ }\Gamma }\left( \Omega \right) $ is a Gevrey
multi-anisotropic space associated with $P\left( D\right) $. This
result clarifies the classical result of H\"{o}rmander in the case
of multi-quasielliptic operators. The result of L. Zanghirati has
been extended by C. Bouzar and R. Cha\"{\i}li in \cite{BC2} to
multi-quasielliptic systems of differential operators.

The aim of this paper is to prove the multi-anisotropic Gevrey
regularity of hypoelliptic linear differential operators with
complex constant coefficients, and consequently we precise the
result of H\"{o}rmander and extend the result of Zanghirati.

\section{Multi-quasiellipticity}

Let $\Omega $ be an open subset of $\mathbb{R}^{n}$, if $\alpha
=\left( \alpha _{1},....\alpha _{n}\right) \in
\mathbb{Z}_{+}^{n},$ $q=\left( q_{1},..,q_{n}\right) \in
\mathbb{R}_{+}^{n}$ and $\xi =\left( \xi _{1},...,\xi _{n}\right)
$ $\in \mathbb{R}^{n}$, we set
\begin{equation*}
\left\vert \alpha \right\vert =\alpha _{1}+...+\alpha _{n}
\end{equation*}

\begin{equation*}
<\alpha ,q>=\sum_{j=1}^{n}\alpha _{j}q_{j}
\end{equation*}%
\begin{equation*}
\xi ^{\alpha }=\xi _{1}^{\alpha _{1}}...\text{ }\xi _{n}^{\alpha
_{n}}
\end{equation*}%
\begin{equation*}
D^{\alpha }=D_{1}^{\alpha _{1}}...\text{ }D_{n}^{\alpha _{n}},D_{j}=\dfrac{1%
}{i}\dfrac{\partial }{\partial \xi _{j}},\text{ }j=1,...,n.
\end{equation*}%
\begin{equation*}
\mathbb{R}_{+}^{n}=\left\{ \xi \in \mathbb{R}^{n}:\xi _{j}>0,\text{ }%
j=1,..,n\right\}
\end{equation*}%
The space $\mathcal{C}_{0}^{\infty }\left( \Omega \right) $ is the
space of functions $u\in \mathcal{C}^{\infty }$ with compact
support in $\Omega $. The space of distributions on $\Omega $ is
denoted $\mathcal{D}^{\prime }\left( \Omega \right) .$

\begin{definition}
Let $A$ be a finite subset of $\overline{\mathbb{R}_{+}^{n}}$, the
Newton's
polyhedron of $A$, denoted $\Gamma (A\mathbb{)}$, is the convex hull of $%
\left\{ 0\right\} \cup A.$
\end{definition}

A Newton's polyhedron $\Gamma $ is always characterized by
\begin{equation*}
\Gamma \mathbb{=}\underset{q\in \mathcal{A}\left( \Gamma \right) }{\bigcap }%
\left\{ \alpha \in \overline{\mathbb{R}_{+}^{n}},\text{
}\left\langle q,\alpha \right\rangle \leq 1\right\} ,
\end{equation*}%
where $\mathcal{A}\left( \Gamma \right) $ is a finite subset of $\mathbb{R}%
^{n}\ \left\{ 0\right\} $.

\begin{definition}
Let $\Gamma \mathbb{=}\underset{q\in \mathcal{A}\left( \Gamma \right) }{%
\bigcap }\left\{ \alpha \in \overline{\mathbb{R}_{+}^{n}},\text{ }%
\left\langle q,\alpha \right\rangle \leq 1\right\} $ be a Newton's
polyhedron, $\Gamma $ is said to be regular, if
\begin{equation*}
q_{j}>0,\text{ }\forall j=1,...,n;\text{ }\forall q=\left(
q_{1},...,q_{n}\right) \in \mathcal{A}\left( \Gamma \right)
\end{equation*}
\end{definition}

We associate with a regular Newton's polyhedron $\Gamma $ the
following elements
\begin{equation*}
\mathcal{V}\left( \Gamma \right) =\left\{ s^{0}=0,\text{ }%
s^{1},...,s^{m\left( \Gamma \right) }\right\} \text{ the set of vertices of }%
\Gamma
\end{equation*}%
\begin{equation*}
\left\vert \xi \right\vert _{\Gamma }=\sum\limits_{\nu \in
\mathcal{V}\left(
\Gamma \right) }\left\vert \xi \right\vert ^{\nu },\text{ }\xi \in \mathbb{R}%
^{n},\text{ where }\left\vert \xi \right\vert ^{\nu }=\left\vert
\xi _{1}\right\vert ^{\nu _{1}}...\left\vert \xi _{n}\right\vert
^{\nu _{n}}
\end{equation*}%
\begin{equation*}
k\left( \alpha ,\Gamma \right) =\inf \left\{ t>0\text{,
}t^{-1}\alpha \in
\Gamma \right\} =\underset{q\in \mathcal{A}\left( \Gamma \right) }{\max }%
\left\langle \alpha ,q\right\rangle ,\text{ }\alpha \in \mathbb{R}_{+}^{n}%
\text{ }
\end{equation*}%
\begin{equation*}
\mu \left( \Gamma \right) =\underset{%
\begin{array}{c}
q\in \mathcal{A}\left( \Gamma \right)  \\
1\leq \text{ }j\leq n%
\end{array}%
}{\max }q_{j}^{-1}\text{ called the formal order of }\Gamma
\end{equation*}

A differential operators with complex constant coefficients
\begin{equation*}
P\left( D\right) =\underset{\alpha }{\sum }a_{\alpha }D^{\alpha }
\end{equation*}%
has its complete symbol
\begin{equation*}
P\left( \xi \right) =\underset{\alpha }{\sum }a_{\alpha }\xi
^{\alpha }
\end{equation*}

\begin{definition}
The Newton's polyhedron of $P$, denoted $\Gamma (P\mathbb{)}$, is
the convex
hull of the set $\left\{ 0\right\} \cup \left\{ \alpha \in \mathbb{Z}%
_{+}^{n}:\text{ }a_{\alpha }\neq 0\right\} .$
\end{definition}

Define the weight function
\begin{equation*}
\left\vert \xi \right\vert _{\mathbb{P}}=\sum\limits_{\alpha \in \mathcal{V}%
\left( P\right) }\left\vert \xi ^{\alpha }\right\vert ,\text{
}\forall \xi \in \mathbb{R}^{n},
\end{equation*}%
where $\mathcal{V}\left( P\right) =\mathcal{V}\left( \Gamma \left(
P\right)
\right) $ is the set of vertices of $\Gamma \left( P\right) .$ Recall%
\begin{equation*}
d(\xi ):=dist(\xi ,N(P)),\text{ where }N(P):=\left\{ \zeta \in \mathbb{C}%
^{n}:P(\zeta )=0\right\}
\end{equation*}

\begin{definition}
The differential operator $P(D)$ is said hypoelliptic in $\Omega
$, if
\begin{equation*}
singsuppP(D)u=singsuppu,\text{ }\forall u\in \mathcal{D}^{\prime
}(\Omega )
\end{equation*}
\end{definition}

The characterization of hypoelliptic differential operators with
constant coefficients is du to L. H\"{o}rmander. The following
result, see the theorem 4.1.3 of \cite{HOR2}, gives some
characterizations of the hypoellipticity.

\begin{theorem}
Let $P(D)$ be a differential operator with constant coefficients,
the following properties are equivalent :

i) The operator $P\left( D\right) $ is hypoelliptic.

ii) $\exists C>0,$ $\exists d>0,$ $\left\vert \xi \right\vert
^{d}\leq Cd(\xi ),$ $\forall \xi \in \mathbb{R}^{n},$ $\left\vert
\xi \right\vert $ large.

iii) If $\xi \in \mathbb{R}^{n},\left\vert \xi \right\vert
\rightarrow
+\infty ,$ then $\dfrac{\left\vert D^{\alpha }P(\xi )\right\vert }{%
\left\vert P(\xi )\right\vert }\rightarrow 0,$ $\forall \alpha
\neq 0.$

iv) $\exists C>0,$ $\exists \rho >0,$ $\dfrac{\left\vert D^{\alpha
}P(\xi )\right\vert }{\left\vert P(\xi )\right\vert }\leq
C\left\vert \xi \right\vert ^{-\rho \left\vert \alpha \right\vert
},$ $\forall \xi \in \mathbb{R}^{n},$ $\left\vert \xi \right\vert
$ large.
\end{theorem}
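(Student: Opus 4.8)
The plan is to separate the statement into an algebraic-geometric core, consisting of the three symbol conditions (ii), (iii), (iv), and an analytic bridge connecting them to the hypoellipticity (i). Concretely, I would establish the cycle
\begin{equation*}
\mathrm{(i)}\Rightarrow\mathrm{(ii)}\Rightarrow\mathrm{(iv)}\Rightarrow\mathrm{(iii)}\Rightarrow\mathrm{(ii)},\qquad \mathrm{(iv)}\Rightarrow\mathrm{(i)},
\end{equation*}
which yields the full equivalence. Throughout I use that $|D^{\alpha}P|=|\partial^{\alpha}P|$, so all the estimates may be read off the ordinary Taylor expansion of $P$.

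The implication (ii) $\Rightarrow$ (iv) is the heart of the symbol estimates. Assuming $|\xi|^{d}\le C\,d(\xi)$, I would first show the pointwise bound
\begin{equation*}
\frac{|D^{\alpha}P(\xi)|}{|P(\xi)|}\le C_{\alpha}\,d(\xi)^{-|\alpha|}.
\end{equation*}
For this, restrict $P$ to each complex line through $\xi$ and factor the resulting one-variable polynomial: since every zero lies at distance at least $d(\xi)$ from $\xi$, each factor changes by at most a bounded ratio on the ball $|\zeta-\xi|\le d(\xi)/2$, whence $\sup_{|\zeta-\xi|\le d(\xi)/2}|P(\zeta)|\le C|P(\xi)|$; Cauchy's inequalities then give the displayed bound. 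Combining it with (ii) produces $|D^{\alpha}P(\xi)|/|P(\xi)|\le C_{\alpha}|\xi|^{-d|\alpha|}$, which is (iv). The step (iv) $\Rightarrow$ (iii) is immediate. For (iii) $\Rightarrow$ (ii), the Taylor expansion of $0=P(\zeta_{0})$ at a nearest zero $\zeta_{0}\in N(P)$, $|\zeta_{0}-\xi|=d(\xi)$, gives
\begin{equation*}
1\le\sum_{\alpha\neq 0}\frac{1}{\alpha!}\,\frac{|D^{\alpha}P(\xi)|}{|P(\xi)|}\,d(\xi)^{|\alpha|},
\end{equation*}
so (iii) forces $d(\xi)\to\infty$ as $|\xi|\to\infty$; since $d$ is a semialgebraic function, the Tarski--Seidenberg theorem upgrades this qualitative divergence to the power lower bound $|\xi|^{d}\le C\,d(\xi)$, i.e.\ (ii).

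For the analytic bridge I would argue (iv) $\Rightarrow$ (i) by building a fundamental solution $E$ with $P(D)E=\delta$ and $\mathrm{sing\,supp}\,E=\{0\}$. Taking $\widehat{E}=1/P$ away from $N(P)\cap\mathbb{R}^{n}$ and regularizing across the real zeros, the Leibniz rule writes $D^{\alpha}(1/P)$ as a sum of products of ratios $D^{\beta}P/P$ times $1/P$; by (iv) this decays like $|\xi|^{-\rho|\alpha|}|P(\xi)|^{-1}$. Since multiplication of $E$ by $x^{\beta}$ corresponds to differentiation $D^{\beta}$ on the Fourier side, for $|\beta|$ large enough every $x^{\beta}D^{\gamma}E$ becomes integrable, so $E\in C^{\infty}(\mathbb{R}^{n}\setminus\{0\})$; the usual parametrix argument $u=E*(P(D)u)$ modulo a smooth remainder then gives hypoellipticity. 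Conversely, (i) $\Rightarrow$ (ii) runs this in reverse: a hypoelliptic operator possesses a fundamental solution smooth off the origin with $P\widehat{E}=1$ on $\{P\neq0\}$, and transcribing the decay of $E$ and its derivatives into growth of $1/P$ feeds back, through the Taylor inequality above, into the power lower bound on $d(\xi)$.

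I expect the main obstacle to be the analytic implication (iv) $\Rightarrow$ (i): the honest construction of a fundamental solution that is $C^{\infty}$ away from the origin forces one to regularize $1/P$ across the real zero set and then to control \emph{all} derivatives uniformly, so that the gain $|\xi|^{-\rho|\alpha|}$ supplied by (iv) can be converted, via the $x^{\beta}\leftrightarrow D^{\beta}$ duality, into genuine decay in the $x$ variable. The symbol estimates (the line-factorization with Cauchy's inequalities, the Taylor expansion, and the Tarski--Seidenberg passage from qualitative to polynomial rates) are, by contrast, essentially routine once the correct inequalities are written down.
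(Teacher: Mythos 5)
First, a point of reference: the paper does not prove this statement at all — it is Hörmander's classical characterization of hypoellipticity, quoted with the pointer ``theorem 4.1.3 of [HOR2]'' — so your attempt must be measured against Hörmander's own proof. Your treatment of the three symbol conditions is correct and is in fact that classical argument: factoring $P$ on complex lines through $\xi$ gives $\sup_{|\zeta-\xi|\le d(\xi)/2}|P(\zeta)|\le C|P(\xi)|$ (each factor $|t-t_j|/|t_j|\le 3/2$ since $|t_j|\ge d(\xi)$), Cauchy's inequalities then give $|D^{\alpha}P(\xi)|/|P(\xi)|\le C_{\alpha}d(\xi)^{-|\alpha|}$, hence (ii)$\Rightarrow$(iv); (iv)$\Rightarrow$(iii) is trivial; and the Taylor expansion at a nearest zero together with Seidenberg--Tarski gives (iii)$\Rightarrow$(ii). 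Your parametrix argument for (iv)$\Rightarrow$(i) is also sound in outline, but you use two facts silently that must be recorded: under (iii) the real zero set of $P$ is compact and $|P(\xi)|\to\infty$ as $|\xi|\to\infty$ (take $\alpha$ of top order in (iii), so that $D^{\alpha}P$ is a nonzero constant). Without these, neither the cutoff definition $\widehat{E}=(1-\chi)/P$ nor the integrability of $D^{\beta}\bigl(\xi^{\gamma}(1-\chi)/P\bigr)$ makes sense.

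The genuine gap is (i)$\Rightarrow$(ii), precisely the direction you treat most casually. As written, ``transcribing the decay of $E$ and its derivatives into growth of $1/P$'' cannot work: a fundamental solution of a hypoelliptic operator need not be tempered (fundamental solutions in general grow exponentially), so $\widehat{E}$ need not exist as a function and ``$P\widehat{E}=1$ on $\{P\neq 0\}$'' carries no quantitative information; moreover $E$ need not decay at all (the heat kernel does not), so there is no decay to transcribe. Even after repair, such an argument only yields the qualitative statement $d(\xi)\to\infty$; your Taylor inequality bounds things in the wrong direction and cannot produce a power bound, so Seidenberg--Tarski must be invoked a second time to pass from $d(\xi)\to\infty$ to $|\xi|^{d}\le Cd(\xi)$. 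The standard repairs are: (a) localize, setting $v=\chi E$ with $\chi\in\mathcal{C}_{0}^{\infty}$ equal to $1$ near $0$; then $v$ has compact support and $P(D)v=\delta+\rho$ with $\rho\in\mathcal{C}_{0}^{\infty}$ (this is where hypoellipticity enters, through smoothness of $E$ off the origin), so the Fourier--Laplace transforms satisfy $P(\zeta)\widehat{v}(\zeta)=1+\widehat{\rho}(\zeta)$ on $\mathbb{C}^{n}$; on $N(P)$ this forces $|\widehat{\rho}(\zeta)|=1$, which against the Paley--Wiener bound $|\widehat{\rho}(\zeta)|\le C_{N}(1+|\zeta|)^{-N}e^{R|\operatorname{Im}\zeta|}$ forces $|\operatorname{Im}\zeta|\to\infty$ as $|\zeta|\to\infty$ on $N(P)$, i.e.\ $d(\xi)\to\infty$; or (b) Hörmander's route: the closed graph theorem applied to the space of solutions in a ball (closed in $\mathcal{C}^{0}$ by hypoellipticity, Fréchet in $\mathcal{C}^{\infty}$) yields $\sup|\nabla u|\le C\sup|u|$ on solutions, which tested on the exponentials $e^{i\langle x,\zeta\rangle}$, $\zeta\in N(P)$, gives $|\zeta|\le Ce^{C|\operatorname{Im}\zeta|}$ and again $d(\xi)\to\infty$. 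Note also that your assessment of where the difficulty lies is inverted: (iv)$\Rightarrow$(i) is the routine direction, while (i)$\Rightarrow$(ii) is the one requiring either functional analysis or Paley--Wiener theory, plus the second pass through Seidenberg--Tarski.
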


The connection between an hypoelliptic operator and its Newton's
polyhedron is given by the following proposition.

\begin{proposition}
The Newton's polyhedron of an hypoelliptic differential operator
is regular.
\end{proposition}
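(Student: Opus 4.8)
The plan is to argue by contraposition: if $\Gamma(P)$ is \emph{not} regular, then $P(D)$ is not hypoelliptic. By the equivalence of (i) and (iii) in the Theorem above, it suffices to produce one index $\alpha\neq0$ and a sequence $\xi\to\infty$ along which $\left\vert D^{\alpha}P(\xi)\right\vert /\left\vert P(\xi)\right\vert$ fails to tend to $0$. Non-regularity means there is a normal $q=(q_{1},\dots,q_{n})\in\mathcal{A}(\Gamma(P))$ with $q_{j}=0$ for some $j$. Since $q$ belongs to the (irredundant) representation of $\Gamma(P)$, the set $F_{q}=\{\alpha\in\Gamma(P):\left\langle q,\alpha\right\rangle =1\}$ is a genuine facet, of dimension $n-1$. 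Moreover $q\neq0$ defines an effective constraint, so some component $q_{k}$ is strictly positive; I will use this to send $\xi$ to infinity.

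First I would isolate the quasi-principal part relative to $q$, namely $P_{q}(\xi)=\sum_{\left\langle q,\alpha\right\rangle =1}a_{\alpha}\xi^{\alpha}$, the sum over the monomials of $P$ lying on $F_{q}$. This is nonzero because $F_{q}$ carries at least one vertex of $\Gamma(P)$, which is necessarily a point of the support of $P$ (the origin lies off $H_{q}$, since $\left\langle q,0\right\rangle =0\neq1$). The crucial observation is that $q_{j}=0$ gives $\left\langle q,\alpha-e_{j}\right\rangle =\left\langle q,\alpha\right\rangle$ (with $e_{j}$ the $j$-th unit vector), so differentiation in the $j$-th variable preserves the $q$-weighted degree. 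Next I would check that some facet monomial $\alpha^{\ast}$ has $\alpha^{\ast}_{j}\geq1$: otherwise every vertex of $F_{q}$ would lie in $\{\alpha_{j}=0\}$, forcing the whole facet into $H_{q}\cap\{\alpha_{j}=0\}$, a set of dimension at most $n-2$, contradicting $\dim F_{q}=n-1$. Consequently $D_{j}P_{q}\neq0$, and its monomials again sit at $q$-weighted degree $1$.

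The heart of the argument is a quasi-homogeneous scaling. Fixing $\eta=(\eta_{1},\dots,\eta_{n})$ in the open positive orthant and setting $\xi_{k}(t)=t^{q_{k}}\eta_{k}$, each monomial contributes $a_{\alpha}t^{\left\langle q,\alpha\right\rangle }\eta^{\alpha}$; as $t\to+\infty$ the facet monomials dominate all others (for which $\left\langle q,\alpha\right\rangle <1$ strictly), so
\[
P(\xi(t))=t\,P_{q}(\eta)+o(t),\qquad (D_{j}P)(\xi(t))=t\,(D_{j}P_{q})(\eta)+o(t).
\]
Choosing $\eta$ in the positive orthant off the nowhere-dense zero sets of the two nonzero polynomials $P_{q}$ and $D_{j}P_{q}$, the ratio $\left\vert D_{j}P(\xi(t))\right\vert /\left\vert P(\xi(t))\right\vert$ tends to the strictly positive constant $\left\vert (D_{j}P_{q})(\eta)\right\vert /\left\vert P_{q}(\eta)\right\vert$, while $\left\vert \xi(t)\right\vert \to+\infty$ because $q_{k}>0$ for some $k$. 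This contradicts property (iii) (equivalently, it violates the decay $\left\vert \xi\right\vert ^{-\rho}$ in (iv) with $\left\vert \alpha\right\vert =1$), so $P(D)$ cannot be hypoelliptic.

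I expect the main obstacle to be the two structural facts that make the scaling bite: that $F_{q}$ is genuinely an $(n-1)$-dimensional facet, so that $P_{q}\neq0$, and that a facet monomial has positive $j$-th exponent, so that $D_{j}P_{q}\neq0$. Both rest on $q$ coming from the irredundant family $\mathcal{A}(\Gamma(P))$, and the second uses the dimension count above. The remaining estimates, that the sub-facet monomials are truly $o(t)$ and that $\eta$ can avoid two proper algebraic subsets of the positive orthant, are routine.
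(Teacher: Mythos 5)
The paper itself offers no argument for this proposition \emph{---} its ``proof'' is the single line ``See \cite{FRIB}'' \emph{---} so your scaling argument is genuinely independent content, and its architecture is the right one: contraposition, extraction of the $q$-principal part $P_{q}$ on a facet $F_{q}$ of an irredundant representation, the dimension count producing a facet monomial $\alpha^{\ast}$ with $\alpha^{\ast}_{j}\geq 1$, and the quasi-homogeneous scaling $\xi_{k}(t)=t^{q_{k}}\eta_{k}$ tested against condition (iii) of H\"{o}rmander's theorem. The two structural facts you single out are argued correctly: $P_{q}\not\equiv 0$ because every vertex of $F_{q}$ lies in the support of $P$ (the origin is off $H_{q}$), and $D_{j}P_{q}\not\equiv 0$ by the dimension count, since distinct monomials cannot cancel.

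There is, however, a genuine gap at the very first step. Non-regularity of $\Gamma(P)$ does \emph{not} mean that some $q\in\mathcal{A}(\Gamma(P))$ has $q_{j}=0$; by the paper's definition it means $q_{j}\leq 0$, and the strictly negative case really occurs and is not covered by your argument. Concretely, $P(\xi)=1+\xi_{1}^{3}+\xi_{2}+\xi_{1}\xi_{2}^{2}$ has Newton polyhedron with vertices $(0,0),(3,0),(1,2),(0,1)$, whose irredundant representation inside $\overline{\mathbb{R}_{+}^{2}}$ consists of $\tfrac{1}{3}(\alpha_{1}+\alpha_{2})\leq 1$ and $-\alpha_{1}+\alpha_{2}\leq 1$: no normal has a zero component, but $(-1,1)$ has a negative one, so on this (non-regular) polyhedron your proof never gets started. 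Moreover, your ``crucial observation'' \emph{---} that differentiation in $\xi_{j}$ preserves the $q$-weighted degree \emph{---} is exactly what fails when $q_{j}<0$, since then $\langle q,\alpha-e_{j}\rangle=1-q_{j}>1$. The repair is easy and should be spelled out: the dimension count giving $\alpha^{\ast}_{j}\geq 1$ is sign-independent, and the same scaling now yields $P(\xi(t))=t\,P_{q}(\eta)+o(t)$ while $D_{j}P(\xi(t))=t^{1-q_{j}}(D_{j}P_{q})(\eta)+o(t^{1-q_{j}})$, so the ratio behaves like $c\,t^{-q_{j}}\to+\infty$, an even stronger violation of (iii). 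Two smaller points: the existence of some $q_{k}>0$ follows not from $q\neq 0$ but from the fact that $F_{q}$ is a nonempty subset of $\overline{\mathbb{R}_{+}^{n}}$ on which $\langle q,\alpha\rangle=1>0$; and the facet-dimension count needs $\Gamma(P)$ to be full-dimensional, which does follow from the paper's representation (any such intersection contains a cube $[0,\varepsilon]^{n}$), but deserves a word.
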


\begin{proof}
See \cite{FRIB}$.$
\end{proof}

\begin{remark}
The converse is not true, $\square =D_{x}^{2}-D_{y}^{2}$ has a
regular Newton's polyhedron with vertices $\left\{ \left(
0,0\right) ,(2,0),(0,2)\right\} ,$ but the operator $\square $ is
not hypoelliptic.
\end{remark}

We introduce multi-quasielliptic polynomials which are a natural
generalization of the classical quasi-elliptic operators. These
operators were characterized first by V. P. Mikha\"{\i}lov
\cite{MIKH}, then studied by J. Friberg \cite{FRIB} and finally
far developed by S. G. Gindikin and L. R. Volevich \cite{GV}$.$

\begin{definition}
\bigskip The polynomial $P(\xi )=\sum\limits_{\alpha }a_{\alpha }\xi
^{\alpha }$ is said to be multi-quasielliptic, if

\begin{enumerate}
\item[i)] its Newton's polyhedron $\Gamma \left( P\right) $ is
regular.

\item[ii)] $\exists C>0$ such that
\begin{equation*}
\left\vert \xi \right\vert _{\mathbb{P}}\leq C(1+\left\vert P(\xi
)\right\vert ),\text{ }\forall \xi \in \mathbb{R}^{n}
\end{equation*}
\end{enumerate}
\end{definition}

\begin{proposition}
A multi-quasielliptic operator $P\left( D\right) $ is
hypoelliptic.
\end{proposition}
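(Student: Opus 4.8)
The plan is to show that a multi-quasielliptic operator satisfies one of Hörmander's equivalent conditions for hypoellipticity from Theorem 1, most naturally condition (ii): that there exist constants $C>0$ and $d>0$ with $\left\vert \xi \right\vert ^{d}\leq Cd(\xi )$ for all large $\left\vert \xi \right\vert $. Let me think about how to establish this from the defining inequality $\left\vert \xi \right\vert _{\mathbb{P}}\leq C(1+\left\vert P(\xi )\right\vert )$.

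So the multi-quasielliptic condition gives a lower bound on $|P(\xi)|$ in terms of $|\xi|_{\mathbb{P}} = \sum_{\alpha \in \mathcal{V}(P)} |\xi^\alpha|$. The key is that since $\Gamma(P)$ is regular (all vertices have positive weights $q_j > 0$), the weight function $|\xi|_{\mathbb{P}}$ grows to infinity as $|\xi| \to \infty$ — in fact it dominates some positive power of $|\xi|$. This is because regularity of the Newton polyhedron means the vertices "span" all coordinate directions, so $\sum_{\alpha \in \mathcal{V}(P)} |\xi^\alpha| \geq c |\xi|^{\delta}$ for some $\delta > 0$ and large $|\xi|$.

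The main obstacle, and the heart of the argument, is to pass from a lower bound on $|P(\xi)|$ for real $\xi$ to the distance estimate $d(\xi) = \mathrm{dist}(\xi, N(P))$ involving complex zeros. One cannot simply read off $d(\xi)$ from $|P(\xi)|$; one needs a quantitative argument. The standard route is: suppose $\zeta \in \mathbb{C}^n$ is a zero of $P$ with $|\zeta - \xi|$ small; expand $P$ in a Taylor series around $\xi$ to control how $|P|$ and its derivatives vary over a polydisc, thereby bounding below the distance to the nearest zero in terms of $|P(\xi)|$ divided by the relevant derivative bounds. Here one exploits that the derivatives $D^\alpha P(\xi)$ have degree strictly lower (in the Newton-polyhedron sense) than $P$ itself, so they are dominated by $|\xi|_{\mathbb{P}}$ and hence by $|P(\xi)|$; this is exactly the mechanism that yields a uniform polynomial lower bound on $d(\xi)$. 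I would structure this as: first establish $|\xi|_{\mathbb{P}} \geq c|\xi|^\delta$ using regularity, then combine with the hypothesis to get $|P(\xi)| \geq c'|\xi|^\delta - C$, and finally invoke the Taylor-expansion/Seidenberg–Łojasiewicz type estimate (as in Hörmander's proof of Theorem 4.1.3 in \cite{HOR2}) to convert this into condition (ii).

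Since the cleanest realization of this argument is already packaged in Hörmander's characterization, I expect the author's proof to be brief — verifying that the multi-quasielliptic inequality, together with the regularity of $\Gamma(P)$, forces $|P(\xi)| \to \infty$ at a polynomial rate, and then citing the equivalence $\mathrm{(i)} \Leftrightarrow \mathrm{(ii)}$ (or $\mathrm{(i)} \Leftrightarrow \mathrm{(iii)}$) of Theorem 1. The one genuinely technical step to watch is the lower bound $|\xi|_{\mathbb{P}} \geq c|\xi|^\delta$: it requires that for each coordinate direction there is a vertex with a positive exponent in that coordinate, which is precisely guaranteed by regularity of the polyhedron, and is the place where a careless argument could break down.
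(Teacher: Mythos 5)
The paper offers no internal argument to compare with: its entire proof of this proposition is the citation ``See \cite{FRIB} or \cite{GV}.'' Your outline follows the standard route of those references (reduce to H\"ormander's characterization in Theorem 1), and your first step is sound: regularity of $\Gamma (P)$ forces each axis point $m_{j}e_{j}$ (with $m_{j}>0$) to be a vertex of $\Gamma (P)$, so $\left\vert \xi \right\vert _{\mathbb{P}}\geq \sum_{j=1}^{n}\left\vert \xi _{j}\right\vert ^{m_{j}}\geq c\left\vert \xi \right\vert ^{\delta }$ for large $\left\vert \xi \right\vert $, and the multi-quasielliptic inequality then yields $\left\vert P(\xi )\right\vert \geq c^{\prime }\left\vert \xi \right\vert ^{\delta }$ for large $\left\vert \xi \right\vert $.

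However, the step you call ``exactly the mechanism'' contains a genuine gap. You argue that the derivatives, having strictly lower Newton degree, are \emph{dominated} by $\left\vert \xi \right\vert _{\mathbb{P}}$ and hence by $\left\vert P(\xi )\right\vert $, and that this yields a polynomial lower bound on $d(\xi )$. Mere domination, i.e.\ bounded ratios $\left\vert D^{\alpha }P(\xi )\right\vert \leq C\left\vert P(\xi )\right\vert $, gives through the Taylor/polydisc argument only a \emph{constant} lower bound $d(\xi )\geq c>0$, and that does not imply hypoellipticity: for $P(\xi )=1+\xi _{1}^{2}$ in $\mathbb{R}^{2}$ all ratios are bounded and $d(\xi )=\sqrt{1+\xi _{1}^{2}}\geq 1$, yet $1+D_{1}^{2}$ is not hypoelliptic, since $u=e^{-x_{1}}g(x_{2})$ solves the equation for an arbitrary distribution $g$. (Likewise, the lower bound $\left\vert P(\xi )\right\vert \geq c^{\prime }\left\vert \xi \right\vert ^{\delta }$ from your steps 1--2 is by itself insufficient: $\xi _{2}^{2}+\xi _{1}^{2}(\xi _{1}-\xi _{2})^{2}$ is bounded below by $c\left\vert \xi \right\vert ^{2}$ but fails condition (iii) of Theorem 1 on the diagonal $\xi _{1}=\xi _{2}$.) What must actually be proved --- and this is where regularity enters a second time, quantitatively --- is a uniform degree drop: setting $\delta _{0}:=\min \left\{ q_{j}:q\in \mathcal{A}(\Gamma (P)),\;1\leq j\leq n\right\} >0$, every $\beta \in \Gamma (P)$ and every $0\neq \alpha \leq \beta $ satisfy $k(\beta -\alpha ,\Gamma )=\max_{q}\left\langle q,\beta -\alpha \right\rangle \leq 1-\delta _{0}\left\vert \alpha \right\vert $, whence $\left\vert D^{\alpha }P(\xi )\right\vert \leq C\left\vert \xi \right\vert _{\mathbb{P}}^{1-\delta _{0}\left\vert \alpha \right\vert }$ (using the standard estimate $\left\vert \xi ^{\gamma }\right\vert \leq \left\vert \xi \right\vert _{\mathbb{P}}^{k(\gamma ,\Gamma )}$ and $\left\vert \xi \right\vert _{\mathbb{P}}\geq 1$). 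Combining this with $\left\vert \xi \right\vert _{\mathbb{P}}\leq C(1+\left\vert P(\xi )\right\vert )$ and with $\left\vert \xi \right\vert _{\mathbb{P}}\geq c\left\vert \xi \right\vert ^{\delta }$ gives $\left\vert D^{\alpha }P(\xi )\right\vert /\left\vert P(\xi )\right\vert \leq C\left\vert \xi \right\vert ^{-\rho \left\vert \alpha \right\vert }$ for large $\left\vert \xi \right\vert $, i.e.\ condition (iv) of Theorem 1, from which hypoellipticity follows (and your Taylor argument then does upgrade (iv) to the distance estimate (ii)). So your plan is salvageable and close to what the cited references do, but the uniform decay rate in $\left\vert \alpha \right\vert $ --- not domination --- is the heart of the proof, and it is precisely the point your sketch leaves unestablished.
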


\begin{proof}
See \cite{FRIB} or \cite{GV}.
\end{proof}

\begin{remark}
The converse is not true. Indeed, consider the following
polynomial
\begin{eqnarray*}
P\left( \xi ,\eta \right) &=&i\xi ^{5}+i\xi \eta ^{4}-4i\xi
^{4}\eta -4i\xi ^{2}\eta ^{3}+6i\xi ^{3}\eta ^{2}+i\xi ^{3}+i\xi
\eta ^{2}+\xi ^{4}\eta ^{2}
\\
&&+\eta ^{6}-4\xi ^{3}\eta ^{3}-4\xi \eta ^{5}+6\xi ^{2}\eta
^{4}+\eta ^{2}\xi ^{2}+\eta ^{4},
\end{eqnarray*}%
which is hypoelliptic thanks to the theorem 4.1.9 of \cite{HOR2}.
We have
\begin{equation*}
P_{\left( 1,1\right) }\left( \xi ,\eta \right) =\eta ^{2}\left(
\xi ^{4}+\eta ^{4}-4\xi ^{3}\eta -4\xi \eta ^{3}+6\xi ^{2}\eta
^{2}\right) =\eta ^{2}\left( \xi -\eta \right) ^{4}
\end{equation*}%
The $q=\left( 1,1\right) -$quasiprincipal part of $P\left( \xi
,\eta \right) $ degenerates on the straight $\xi =\eta $, hence
the polynomial $P\left( \xi ,\eta \right) $ is not
multi-quasielliptic, see \cite{GV}.
\end{remark}

\section{Multi-anisotropic Gevrey vectors}

The multi-anisotropic Gevrey spaces were explicitly defined by L.
Zanghirati in \cite{ZANG} for studying the multi-anistropic Gevrey
regularity of multi-quasielliptic differential operators by the
method of elliptic iterates.\bigskip

\begin{definition}
Let $\Omega $ be an open subset of $\mathbb{R}^{n}$, $\Gamma $ a
regular Newton's polyhedron and $s\geq 1.$ Denote $G^{s,\text{
}\Gamma }(\Omega )$
the space of functions $u\in \mathcal{C}^{\infty }(\Omega )$ such that $%
\forall K\subset \Omega ,\exists C>0$, $\forall \alpha \in \mathbb{Z}%
_{+}^{n} $,
\begin{equation}
\underset{x\in K}{\sup }\left\vert D^{\alpha }u(x)\right\vert \leq
C^{\left\vert \alpha \right\vert +1}k(\alpha ,\Gamma )^{s\mu
\;k(\alpha ,\Gamma )}  \label{4.1.1}
\end{equation}
\end{definition}

\begin{example}
If $\Gamma $ is the regular Newton's polyhedron defined by
\begin{equation*}
\Gamma \mathbb{=}\left\{ \alpha \in \overline{\mathbb{R}_{+}^{n}}:\underset{%
j=1}{\overset{n}{\sum }}m_{j}^{-1}\alpha _{j}\leq 1,m_{j}\in \mathbb{R}%
_{+}\right\} ,
\end{equation*}%
then
\begin{equation*}
G^{s,\text{ }\Gamma }\left( \Omega \right) =\left\{
\begin{array}{c}
u\in \mathcal{C}^{\infty }\left( \Omega \right) ,\forall K\subset
\Omega
,\exists C>0,\forall \alpha \in \mathbb{Z}_{+}^{n} \\
\left\vert D^{\alpha }u\left( x\right) \right\vert \leq
C^{\left\vert \alpha
\right\vert +1}\langle \alpha ,q\rangle ^{s\langle \alpha ,q\rangle }%
\end{array}%
\right\} ,
\end{equation*}%
where $q:=\left( \dfrac{m}{m_{1}},...,\dfrac{m}{m_{n}}\right) $ and $m:=%
\underset{j}{\max }m_{j}$, i.e. $G^{s,\text{ }\Gamma }\left(
\Omega \right) $ is the classical anisotropic Gevrey space
$G^{s,\;q}\left( \Omega \right) $.
If $m_{1}=m_{2}=...=m_{n}$, we obtain the classical isotropic Gevrey space $%
G^{s}\left( \Omega \right) .$
\end{example}

\begin{definition}
Let $\Gamma $ be the regular Newton's polyhedron of $P\left( D\right) $ and $%
s\geq 1,$ the space of Gevrey vectors of $P\left( D\right) $, denoted $%
G^{s}\left( \Omega ,P\right) $, is the space of $u\in
\mathcal{C}^{\infty
}\left( \Omega \right) $ such that , $\forall K$ compact of $\Omega $, $%
\exists C>0,\forall l\in \mathbb{N},$%
\begin{equation*}
\left\Vert P^{l}u\right\Vert _{L^{\infty }\left( K\right) }\leq
C^{l+1}\left( l!\right) ^{s\mu \left( \Gamma \right) }
\end{equation*}
\end{definition}

\begin{remark}
We can take $l^{sl\mu \left( \Gamma \right) }$\ instead of $\left(
l!\right) ^{s\mu \left( \Gamma \right) }.$
\end{remark}

We recall a result of L. Zanghirati \cite{ZANG}\ and C. Bouzar and R. Cha%
\"{\i}li \cite{BC2} wich gives the multi-anisotropic Gevrey
regularity of Gevrey vectors of multi-quasielliptic operators.

\begin{theorem}
Let $\Omega $ be an open subset of $\mathbb{R}^{n},$ $s>1$ and $P$
a linear differential operator with complex constant coefficients
with regular Newton's polyhedron $\Gamma $. Then the following
assertions are equivalent :

$i)$ $P$ is multi-quasielliptic in $\Omega $

$ii)$ $G^{s}\left( \Omega ,P\right) =G^{s,\text{ }\Gamma }\left(
\Omega \right) $
\end{theorem}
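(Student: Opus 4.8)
The plan is to treat the two implications separately, after first recording the inclusion that holds unconditionally.

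First I would observe that $G^{s,\Gamma}(\Omega)\subseteq G^{s}(\Omega,P)$ holds whenever $\Gamma=\Gamma(P)$, independently of multi-quasiellipticity. Indeed, expanding $P^{l}(\xi)=\sum_{\beta}b_{\beta}\xi^{\beta}$, every exponent $\beta$ in the support lies in $l\Gamma$, so $k(\beta,\Gamma)\leq l$; since $t\mapsto t^{s\mu t}$ is increasing for $t\geq 1$, the bound $\left\vert D^{\beta}u\right\vert \leq C^{|\beta|+1}k(\beta,\Gamma)^{s\mu k(\beta,\Gamma)}\leq C^{|\beta|+1}l^{s\mu l}$ holds termwise. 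Counting the polynomially-many exponents $\beta\in l\Gamma\cap\mathbb{Z}_{+}^{n}$ and controlling $\sum_{\beta}\left\vert b_{\beta}\right\vert$ by $(\sum_{\alpha}|a_{\alpha}|)^{l}$ yields $\Vert P^{l}u\Vert_{L^{\infty}(K)}\leq C^{l+1}l^{s\mu l}$, which by the Remark is the defining estimate of $G^{s}(\Omega,P)$. Consequently the equivalence reduces to the claim that \emph{$P$ is multi-quasielliptic if and only if $G^{s}(\Omega,P)\subseteq G^{s,\Gamma}(\Omega)$.}

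For the implication $(i)\Rightarrow(ii)$ I would use the method of multi-quasielliptic iterates. The starting point is the a priori estimate furnished by condition (ii) of multi-quasiellipticity: via Plancherel, $\left\vert \xi\right\vert_{\mathbb{P}}\leq C(1+\left\vert P(\xi)\right\vert)$ gives
\begin{equation*}
\sum_{\nu\in\mathcal{V}(\Gamma)}\Vert D^{\nu}\varphi\Vert_{L^{2}}\leq C\left(\Vert\varphi\Vert_{L^{2}}+\Vert P\varphi\Vert_{L^{2}}\right),\qquad \varphi\in\mathcal{C}_{0}^{\infty}(\Omega),
\end{equation*}
and hence, since $k(\alpha,\Gamma)\leq 1$ controls $D^{\alpha}$ for $\alpha\in\Gamma$, an estimate of every derivative $D^{\alpha}\varphi$ with $\alpha\in\Gamma\cap\mathbb{Z}_{+}^{n}$ by $\Vert\varphi\Vert+\Vert P\varphi\Vert$. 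The heart of the argument is to iterate this inequality along a family of nested cutoff functions $\chi_{j}$ adapted to a compact $K\subset\Omega$, applying it successively to $P^{j}u$ and using the Leibniz rule to absorb the commutators. Tracking the growth of the constants through the $k(\alpha,\Gamma)$ iterations, and feeding in the Gevrey-vector bounds $\Vert P^{l}u\Vert\leq C^{l+1}(l!)^{s\mu}$, the accumulated series is summable precisely because $s>1$, and one recovers
\begin{equation*}
\sup_{x\in K}\left\vert D^{\alpha}u(x)\right\vert \leq C^{|\alpha|+1}k(\alpha,\Gamma)^{s\mu\, k(\alpha,\Gamma)},
\end{equation*}
i.e. $u\in G^{s,\Gamma}(\Omega)$. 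I expect this iteration — the bookkeeping of the constants and the combinatorics of the nested cutoffs — to be the main obstacle, exactly as in the elliptic case treated by Zanghirati.

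For the converse $(ii)\Rightarrow(i)$ I would argue by contraposition. Since $\Gamma$ is assumed regular, if $P$ fails to be multi-quasielliptic then inequality (ii) of the definition must fail, so there is a sequence $\xi^{(k)}\in\mathbb{R}^{n}$ with $\left\vert\xi^{(k)}\right\vert\to\infty$ and $\left\vert\xi^{(k)}\right\vert_{\mathbb{P}}\geq k\,(1+\left\vert P(\xi^{(k)})\right\vert)$. Passing to a subsequence I would build a counterexample of the form $u(x)=\sum_{k}c_{k}e^{i\langle x,\xi^{(k)}\rangle}$ with coefficients $c_{k}$ decaying fast enough to guarantee $u\in\mathcal{C}^{\infty}$ and the Gevrey-vector bounds for $P^{l}u=\sum_{k}c_{k}P(\xi^{(k)})^{l}e^{i\langle x,\xi^{(k)}\rangle}$ (here the smallness of $\left\vert P(\xi^{(k)})\right\vert$ relative to $\left\vert\xi^{(k)}\right\vert_{\mathbb{P}}$ is used), while arranging that for some vertex $\nu\in\mathcal{V}(\Gamma)$ the derivative $D^{\nu}u$ violates the $G^{s,\Gamma}$ bound. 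This produces $u\in G^{s}(\Omega,P)\setminus G^{s,\Gamma}(\Omega)$, contradicting $(ii)$ and completing the proof.
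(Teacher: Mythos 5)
You should know at the outset that the paper does not prove this theorem at all: it is introduced with ``We recall a result of L. Zanghirati and C. Bouzar--R. Cha\"{\i}li,'' and its proof is delegated entirely to \cite{ZANG} and \cite{BC2}. So the only meaningful comparison is with those references, and there your overall strategy is indeed the standard one: the inclusion $G^{s,\Gamma}(\Omega)\subseteq G^{s}(\Omega,P)$ is elementary and unconditional, the inclusion $G^{s}(\Omega,P)\subseteq G^{s,\Gamma}(\Omega)$ under multi-quasiellipticity is proved by the method of iterates starting from the $L^{2}$ a priori estimate, and the converse is proved by contraposition via an exponential-series counterexample. Your first step is complete and correct: the exponents of $P^{l}$ lie in $l\Gamma$, hence $k(\beta,\Gamma)\leq l$, and the coefficient and lattice-point counting arguments close up, so the equivalence does reduce to the single inclusion $G^{s}(\Omega,P)\subseteq G^{s,\Gamma}(\Omega)$.

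However, as a proof the proposal has two genuine gaps, and they sit exactly where all the work lies. For $(i)\Rightarrow(ii)$ you state the a priori estimate $\sum_{\nu\in\mathcal{V}(\Gamma)}\Vert D^{\nu}\varphi\Vert_{L^{2}}\leq C(\Vert\varphi\Vert_{L^{2}}+\Vert P\varphi\Vert_{L^{2}})$ and then announce that one should iterate it along nested cutoffs, absorb commutators by Leibniz, and track constants; but this induction --- in which each commutator term must be re-estimated on a slightly smaller set and the constants must close up to precisely $C^{|\alpha|+1}k(\alpha,\Gamma)^{s\mu\,k(\alpha,\Gamma)}$, not to some weaker quantity like $|\alpha|^{s\mu|\alpha|}$ --- is the entire content of Zanghirati's paper, and nothing in your text shows it actually closes. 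For $(ii)\Rightarrow(i)$ the counterexample is likewise only declared. The point you do not address is the lower bound: to show $u=\sum_{k}c_{k}e^{i\langle x,\xi^{(k)}\rangle}\notin G^{s,\Gamma}(\Omega)$ it is not enough to pick $c_{k}$ ``decaying slowly''; you must, for suitable $\alpha=l\nu$ with $\nu$ a vertex and suitable $l=l(k)\to\infty$, isolate a dominant term in $D^{\alpha}u$ and show it exceeds every bound $C^{|\alpha|+1}k(\alpha,\Gamma)^{s\mu\,k(\alpha,\Gamma)}$, while both the lower-frequency and higher-frequency terms of the series remain negligible. This forces a lacunary choice of the subsequence $\xi^{(k)}$ and a simultaneous tuning of $c_{k}$ against three competing constraints (smoothness of $u$, the Gevrey-vector bounds exploiting $|P(\xi^{(k)})|\leq|\xi^{(k)}|_{\mathbb{P}}/k$, and the failure of the $G^{s,\Gamma}$ estimate). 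Such a construction can be carried out, but it is precisely the argument your proposal stops short of giving; as written, both implications remain plans rather than proofs.
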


\section{Multi-anisotropic Gevrey hypoellipticity of hypoelliptic operators}

In this section, $P=\underset{\alpha }{\sum }a_{\alpha }D^{\alpha
}$ is an hypoelliptic differential operator with complex constant
coefficients.

\begin{definition}
A finite set $\mathcal{H}$ $\subset \overline{\mathbb{R}_{+}^{n}}$
is said a polyhedron of hypoellipticity of $P,$ if

\begin{enumerate}
\item $\forall \nu \in \mathcal{H},\exists C>0,\forall \xi \in \mathbb{R}%
^{n},\;\left\vert \xi \right\vert ^{\nu }\leq C\left( 1+d\left(
\xi \right) \right) $

\item $\mathcal{H}$ has vertices with rational components.

\item $\mathcal{H}$ is regular.
\end{enumerate}
\end{definition}

\begin{remark}
If $\nu $ belongs to the convex hull of $\mathcal{H},$ i.e. $\nu
=\sum\limits _{\substack{ i\in I  \\ I\;fini}}\lambda _{i}\beta
_{i},$ where $\beta _{i}\in \mathcal{H}$ and $\sum\limits_{i\in
I}\lambda _{i}=1,\lambda _{i}\geq 0,$ then $\left\vert \xi
\right\vert ^{\nu }\leq C\left( 1+d(\xi )\right) ,$ $\forall \xi
\in \mathbb{R}^{n},$ therefor it is natural to assume that
$\mathcal{H}$ is convex.
\end{remark}

\begin{remark}
The set $\mathcal{H}$ is never empty, as an hypoelliptic operator
satisfies : $\exists C>0,$ $\exists d>0,\left\vert \xi \right\vert
^{d}\leq C\left( 1+d\left\vert \xi \right\vert \right) ,$ $\forall
\xi \in \mathbb{R}^{n}.$
\end{remark}

\begin{definition}
Denote $\sigma $ be the smallest natural integer such that
\begin{equation*}
\sigma \mathcal{V}\left( \mathcal{H}\right) \subset
2\mathbb{N}_{0}^{n},
\end{equation*}%
and define the differential operator $Q_{\mathcal{H}}$ $\left(
D\right) ,$ by
\begin{equation*}
Q_{\mathcal{H}}\left( D\right) =\sum_{\alpha \in \mathcal{V}\left( \mathcal{H%
}\right) }D^{\sigma \alpha }
\end{equation*}
\end{definition}

\begin{proposition}
The operator $Q_{\mathcal{H}}\left( D\right) $ is
multi-quasielliptic.
\end{proposition}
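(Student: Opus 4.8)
The plan is to verify the two defining conditions of multi-quasiellipticity (Definition in Section 2) for the operator $Q_{\mathcal{H}}\left( D\right) =\sum_{\alpha \in \mathcal{V}\left( \mathcal{H}\right) }D^{\sigma \alpha }$. First I would identify the Newton's polyhedron of $Q_{\mathcal{H}}$. Its monomials are exactly $\xi^{\sigma\alpha}$ for $\alpha\in\mathcal{V}(\mathcal{H})$, so $\Gamma\left(Q_{\mathcal{H}}\right)$ is the convex hull of $\{0\}\cup\sigma\mathcal{V}(\mathcal{H})$, which is simply the dilated polyhedron $\sigma\mathcal{H}$ (since $\mathcal{H}$ is assumed convex, its vertices generate it and dilation commutes with taking convex hulls). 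The regularity of $\mathcal{H}$ is part of its definition as a polyhedron of hypoellipticity, and regularity is invariant under positive dilation because it is a condition on the signs of the components of the outer normals $q\in\mathcal{A}(\Gamma)$; scaling $\Gamma$ by $\sigma$ scales these normals by $\sigma^{-1}$, preserving positivity. This settles condition (i).

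The substantive work is condition (ii): the estimate $\left\vert \xi \right\vert _{\mathbb{Q}_{\mathcal{H}}}\leq C\left(1+\left\vert Q_{\mathcal{H}}(\xi)\right\vert\right)$ for all $\xi\in\mathbb{R}^n$, where the weight is built from the vertices of $\Gamma\left(Q_{\mathcal{H}}\right)=\sigma\mathcal{H}$. Since those vertices are exactly $\sigma\alpha$ for $\alpha\in\mathcal{V}(\mathcal{H})$, the weight function is $\left\vert \xi \right\vert _{\mathbb{Q}_{\mathcal{H}}}=\sum_{\alpha\in\mathcal{V}(\mathcal{H})}\left\vert\xi^{\sigma\alpha}\right\vert=\sum_{\alpha\in\mathcal{V}(\mathcal{H})}\left\vert\xi\right\vert^{\sigma\alpha}$. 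Here the key advantage of the definition of $Q_{\mathcal{H}}$ is that, because $\sigma\mathcal{V}(\mathcal{H})\subset 2\mathbb{N}_0^n$, every exponent $\sigma\alpha$ is a vector of \emph{even} nonnegative integers, so each monomial $\xi^{\sigma\alpha}=\left\vert\xi\right\vert^{\sigma\alpha}$ is real and nonnegative. Consequently $Q_{\mathcal{H}}(\xi)=\sum_{\alpha\in\mathcal{V}(\mathcal{H})}\xi^{\sigma\alpha}\geq 0$ is itself a sum of nonnegative terms, and it coincides exactly with the weight: $\left\vert Q_{\mathcal{H}}(\xi)\right\vert=Q_{\mathcal{H}}(\xi)=\left\vert\xi\right\vert_{\mathbb{Q}_{\mathcal{H}}}$. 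The required inequality then holds with $C=1$, indeed as the trivial identity $\left\vert\xi\right\vert_{\mathbb{Q}_{\mathcal{H}}}\leq 1+\left\vert Q_{\mathcal{H}}(\xi)\right\vert$.

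The only point that demands care, and which I expect to be the main (if minor) obstacle, is the bookkeeping identifying the vertices of $\Gamma\left(Q_{\mathcal{H}}\right)$ with $\sigma\mathcal{V}(\mathcal{H})$: one must check that no point of $\sigma\mathcal{V}(\mathcal{H})$ is absorbed into the interior or onto a face of the dilated polyhedron, i.e. that dilation by $\sigma$ sends vertices to vertices and creates no new ones. This is immediate because an affine bijection (here $x\mapsto\sigma x$) maps the vertex set of a polytope bijectively onto the vertex set of its image, and adjoining $\{0\}$ is compatible since $0$ is already a vertex of $\mathcal{H}$ by its construction as a Newton polyhedron. Once this identification is in hand, the whole proposition reduces to the observation that the evenness condition defining $\sigma$ was engineered precisely so that $Q_{\mathcal{H}}(\xi)$ equals its own weight function, making the multi-quasielliptic estimate automatic. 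I would therefore present the argument in two short steps — regularity via dilation invariance, then the estimate via the evenness-forced nonnegativity — and keep the convex-hull bookkeeping to a single sentence.
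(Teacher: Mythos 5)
Your proof is correct and takes essentially the same route as the paper: the key observation in both is that the evenness of the exponents $\sigma\alpha$ forces $\left\vert Q_{\mathcal{H}}(\xi)\right\vert = \sum_{\alpha\in\mathcal{V}(\mathcal{H})}\left\vert \xi^{\sigma\alpha}\right\vert = \left\vert \xi\right\vert_{Q_{\mathcal{H}}}$, so the multi-quasielliptic estimate holds with constant $1$. The paper's proof is terser, taking the identification $\Gamma\left(Q_{\mathcal{H}}\right)=\sigma\mathcal{H}$ and its regularity for granted, so your explicit verification of condition (i) is a careful addition rather than a different argument.
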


\begin{proof}
The Newton's polyhedron of the differential operator
$Q_{\mathcal{H}}$ has
vertices with even positive integer components. Then%
\begin{equation*}
\left\vert Q_{\mathcal{H}}\left( \xi \right) \right\vert
=\underset{\alpha \in \mathcal{V}\left( \mathcal{H}\right) }{\sum
}\left\vert \xi ^{\sigma \alpha }\right\vert =\left\vert \xi
\right\vert _{Q_{\mathcal{H}}},
\end{equation*}%
hence%
\begin{equation*}
1+\left\vert \xi \right\vert _{Q_{\mathcal{H}}}\leq \left( 1+\left\vert Q_{%
\mathcal{H}}\left( \xi \right) \right\vert \right) ,\text{
}\forall \xi \in \mathbb{R}^{n}
\end{equation*}
\end{proof}

Let $v\in \mathcal{C}_{0}^{\infty }\left( \mathbb{R}^{n}\right)
,s\in \mathbb{Z}_{+}$ and $\varepsilon >0,$ then
\begin{equation*}
\left\vert \left\vert \left\vert v\right\vert \right\vert
\right\vert _{s,\varepsilon }^{2}:=\int_{\mathbb{R}^{n}}\left(
1+\varepsilon d\left( \xi \right) \right) ^{s}\left\vert
\widehat{v}\left( \xi \right) \right\vert ^{2}d\xi
\end{equation*}%
The following result is the lemma 4.4.3 of \cite{HOR2}$.$

\begin{lemma}
Let $u$\ be a solution of the equation $Pu=0$ defined in the ball $%
B_{\varepsilon }=\left\{ x\in \mathbb{R}^{n}:\left\vert
x\right\vert <\varepsilon \right\} ,$ and let $\varphi \in
\mathcal{C}_{0}^{\infty }\left( B_{1}\right) $ and the integer
$s\geq 1.$ Then
\begin{equation}
\underset{\alpha \neq 0}{\sum }\varepsilon ^{-2\left\vert \alpha
\right\vert }\left\vert \left\vert \left\vert P^{\left( \alpha
\right) }\left( D\right) \left( \varphi ^{\varepsilon }u\right)
\right\vert \right\vert \right\vert _{s,\varepsilon }^{2}\leq
C\underset{\alpha \neq 0}{\sum }\varepsilon ^{-2\left\vert \alpha
\right\vert }\int_{B_{\varepsilon }}\left\vert P^{\left( \alpha
\right) }\left( D\right) u\right\vert ^{2}dx,
\end{equation}%
where $C$ is independent of $\varepsilon $ and $u.$
\end{lemma}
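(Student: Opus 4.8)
The plan is to pass to the Fourier side and combine the exact Leibniz formula with the temperate behaviour of the weight $1+\varepsilon d(\xi)$. Writing $v=\varphi^\varepsilon u$ with $\varphi^\varepsilon(x)=\varphi(x/\varepsilon)$ and using Plancherel, the left-hand side becomes
\[
E_s:=\sum_{\alpha\neq 0}\varepsilon^{-2|\alpha|}|||P^{(\alpha)}(D)v|||_{s,\varepsilon}^2=\int_{\mathbb{R}^n}\bigl(1+\varepsilon d(\xi)\bigr)^s\,\Theta_\varepsilon(\xi)\,|\widehat v(\xi)|^2\,d\xi,
\]
where $\Theta_\varepsilon(\xi)=\sum_{\alpha\neq 0}\varepsilon^{-2|\alpha|}|P^{(\alpha)}(\xi)|^2$. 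For constant coefficients the Leibniz formula is exact,
\[
P^{(\beta)}(D)v=\sum_{\gamma}\frac{1}{\gamma!}\bigl(D^\gamma\varphi^\varepsilon\bigr)\,P^{(\beta+\gamma)}(D)u,
\]
and since $Pu=0$ on $B_\varepsilon\supset\operatorname{supp}\varphi^\varepsilon$ the term $\gamma=\beta=0$ drops out; in particular $P(D)v=f:=\sum_{\gamma\neq 0}\tfrac1{\gamma!}(D^\gamma\varphi^\varepsilon)P^{(\gamma)}(D)u$, so that $P(\xi)\widehat v(\xi)=\widehat f(\xi)$. Each $D^\gamma\varphi^\varepsilon=\varepsilon^{-|\gamma|}(D^\gamma\varphi)(\cdot/\varepsilon)$ is localised to $B_\varepsilon$ and carries exactly the gain $\varepsilon^{-|\gamma|}$ that matches the weights $\varepsilon^{-2|\alpha|}$ in $E_s$.

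The second ingredient is that the weight is temperate at scale $\varepsilon$: as $d$ is $1$-Lipschitz, $d(\xi)\le d(\eta)+|\xi-\eta|$, whence
\[
\bigl(1+\varepsilon d(\xi)\bigr)^{s/2}\le\bigl(1+\varepsilon|\xi-\eta|\bigr)^{s/2}\bigl(1+\varepsilon d(\eta)\bigr)^{s/2}.
\]
Inserting this into the convolution $\widehat{gw}=(2\pi)^{-n}\widehat g*\widehat w$ and applying Young's inequality ($L^1*L^2\to L^2$) gives the multiplication estimate $|||gw|||_{s,\varepsilon}\le C\,\bigl\|(1+\varepsilon|\cdot|)^{s/2}\widehat g\bigr\|_{L^1}\,|||w|||_{s,\varepsilon}$, and for $g=D^\gamma\varphi^\varepsilon$ a change of variables turns the $L^1$-factor into $C_{\gamma,s}\,\varepsilon^{-|\gamma|}$ because $\widehat\varphi$ is Schwartz. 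This lets derivatives of the cutoff be traded cleanly against powers of $\varepsilon$ inside the weighted norm.

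With these tools I would argue by induction on $s$. For $s=0$ the weight is trivial and the Leibniz formula together with Plancherel gives at once $E_0\le C\sum_{\beta\neq 0}\varepsilon^{-2|\beta|}\|P^{(\beta)}(D)u\|_{L^2(B_\varepsilon)}^2$, after setting $\beta=\alpha+\gamma$ (so that $\varepsilon^{-|\alpha|}\varepsilon^{-|\gamma|}=\varepsilon^{-|\beta|}$) and summing the finitely many indices $\beta$ with $P^{(\beta)}\neq 0$; here $\beta\neq 0$ automatically since $\alpha\neq 0$. This is precisely the asserted inequality at $s=0$. For the inductive step I would split $(1+\varepsilon d)^s=(1+\varepsilon d)^{s-1}+\varepsilon d\,(1+\varepsilon d)^{s-1}$, so that $E_s=E_{s-1}+R$ with $R=\varepsilon\int d(\xi)(1+\varepsilon d(\xi))^{s-1}\Theta_\varepsilon(\xi)|\widehat v|^2$. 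The remainder $R$ is where hypoellipticity must enter: the identity $P(\xi)\widehat v=\widehat f$ controls $\widehat v$ on the region where $|P(\xi)|$ is bounded below, while on the region where $|P(\xi)|$ is small the Taylor expansion $P(\zeta)=0=\sum_\gamma\tfrac1{\gamma!}P^{(\gamma)}(\xi)(\zeta-\xi)^\gamma$ at the nearest zero, combined with the quantitative bounds $|\xi|^{\delta}\le Cd(\xi)$ and $|P^{(\gamma)}(\xi)|\le C|\xi|^{-\rho|\gamma|}|P(\xi)|$ from parts (ii) and (iv) of the characterization theorem, forces $d(\xi)$—hence the extra weight factor—to be absorbed either into $\Theta_\varepsilon$ at order $s-1$ or into the source $\widehat f$.

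The main obstacle is exactly this absorption in $R$: one must show, uniformly in $\varepsilon$ and in $u$, that the single extra power $\varepsilon d(\xi)$ is converted either into a quantity already counted by $E_{s-1}$ or into a contribution of $\widehat f$, which by the first paragraph feeds back into the unweighted right-hand side. Keeping every constant independent of $\varepsilon$ while passing through the two regimes $|P|\gtrsim 1$ and $|P|\ll 1$, and ensuring that the finitely many combinatorial factors from Leibniz do not accumulate, is the delicate point; the temperate (Lipschitz) character of $d$ and the polynomial nature of $P$ (only finitely many nonzero $P^{(\beta)}$) are what make it close.
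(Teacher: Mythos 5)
Your preparatory material is sound as far as it goes: the Plancherel reformulation, the exact Leibniz formula $P^{(\beta)}(D)(\varphi^{\varepsilon}u)=\sum_{\gamma}\frac{1}{\gamma!}(D^{\gamma}\varphi^{\varepsilon})P^{(\beta+\gamma)}(D)u$, the remark that $Pu=0$ kills the $\gamma=0$ term when $\beta=0$, the temperateness of the weight coming from the Lipschitz property of $d$, and the base case $s=0$ are all correct. But the argument stops exactly where the lemma begins: the inductive step, i.e.\ the absorption of the extra factor $1+\varepsilon d(\xi)$, is only described as ``the main obstacle'' and is never carried out. Since the statement is essentially trivial at $s=0$ (no weight) and the whole content of the lemma is the uniform-in-$\varepsilon$ control of the weight $(1+\varepsilon d(\xi))^{s}$, what you have is a plan rather than a proof. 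Note also that the paper itself offers no proof to compare with: the lemma is quoted verbatim as Lemma 4.4.3 of \cite{HOR2}, so the relevant benchmark is H\"ormander's own argument.

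The ingredient you are missing --- and the reason your two-regime discussion ($|P|$ bounded below versus $|P|$ small) cannot be closed as sketched --- is not hypoellipticity at all, but a purely algebraic fact about arbitrary polynomials, proved by H\"ormander at the beginning of the same chapter of \cite{HOR2}: there is a constant $C$ depending only on the degree $m$ and the dimension $n$ such that $|P^{(\alpha)}(\xi)|\,d(\xi)^{|\alpha|}\leq C\,|P(\xi)|$ for all $\alpha\neq 0$ and all real $\xi$; equivalently, $d(\xi)$ is comparable to $\min_{\alpha\neq 0}\bigl(|P(\xi)|/|P^{(\alpha)}(\xi)|\bigr)^{1/|\alpha|}$. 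With this inequality the absorption becomes mechanical: separating the cases $d(\xi)\leq 1$ and $d(\xi)\geq 1$ gives $(1+\varepsilon d(\xi))\,|P^{(\alpha)}(\xi)|\leq 2|P^{(\alpha)}(\xi)|+C\varepsilon|P(\xi)|$, so after multiplication by $\varepsilon^{-|\alpha|}$ the extra weight factor either stays on the same term with exponent lowered to $s-1$, or produces $\varepsilon^{-(|\alpha|-1)}|P(\xi)|$; the latter is re-expanded through Leibniz (using $Pu=0$) into cutoff-derivative terms carrying exactly the matching powers of $\varepsilon$, and this is precisely why the estimate is formulated as a sum over \emph{all} $\alpha\neq 0$: the error terms are re-absorbed by the full left-hand side at level $s-1$ and by the right-hand side. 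Your proposed use of conditions (ii) and (iv) of the hypoellipticity characterization points in a misleading direction: those relate $d(\xi)$ to $|\xi|$ and are what the paper needs in the \emph{subsequent} lemma (where the polyhedron of hypoellipticity enters), whereas the present lemma is valid for every constant-coefficient operator, hypoelliptic or not. Without importing the algebraic lemma above, the induction you outline cannot be completed with constants uniform in $\varepsilon$ and $u$.
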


\begin{remark}
In the lemma $\varphi ^{\varepsilon }$\ denotes $\varphi
^{\varepsilon }\left( x\right) :=\varphi \left(
\frac{x}{\varepsilon }\right) $ .
\end{remark}

Thanks to this lemma, we obtain the following result$.$

\begin{lemma}
Let $\beta $ $\in \mathbb{Z}_{+}^{n}\cap \sigma \mathcal{H}$, then
there
exists a constant $C>0,$ such that for every solution $u$ of $Pu=0$ in $%
B_{\varepsilon }$ and $\varepsilon \in \left] 0,1\right[ ,$ we
have
\begin{equation*}
\varepsilon ^{2\sigma }\underset{\alpha \neq 0}{\sum }\varepsilon
^{-2\left\vert \alpha \right\vert }\underset{B_{\frac{\varepsilon }{2}}}{%
\int }\left\vert P^{\left( \alpha \right) }\left( D\right)
D^{\beta }u\right\vert ^{2}dx\leq C\underset{\alpha \neq 0}{\sum
}\varepsilon
^{-2\left\vert \alpha \right\vert }\underset{B_{\varepsilon }}{\int }%
\left\vert P^{\left( \alpha \right) }\left( D\right) u\right\vert
^{2}dx
\end{equation*}
\end{lemma}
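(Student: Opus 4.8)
The plan is to localize the inequality to the half-ball, pass to the Fourier side, and feed the outcome into the preceding lemma (Lemma 4.4.3 of \cite{HOR2}) with the exponent chosen as $s=2\sigma$. The point is that the factor $\varepsilon^{2\sigma}D^{\beta}$ is exactly what is needed to convert the extra weight $|\xi^{\beta}|$ into a power of the weight $1+\varepsilon d(\xi)$ appearing in $\vert\vert\vert\cdot\vert\vert\vert_{s,\varepsilon}$.

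First I would record the weight estimate coming from the hypoellipticity polyhedron. Since $\beta\in\mathbb{Z}_{+}^{n}\cap\sigma\mathcal{H}$, the point $\nu:=\sigma^{-1}\beta$ lies in $\mathcal{H}$, so by the first defining property of a polyhedron of hypoellipticity (extended to points of the convex hull as in the preceding remark) there is a constant $C_{0}>0$, independent of $\xi$, with $\left\vert\xi\right\vert^{\nu}\leq C_{0}(1+d(\xi))$ for all $\xi\in\mathbb{R}^{n}$. Raising this to the power $\sigma$ gives $\left\vert\xi^{\beta}\right\vert=(\left\vert\xi\right\vert^{\nu})^{\sigma}\leq C_{0}^{\sigma}(1+d(\xi))^{\sigma}$. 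Since $\varepsilon\in\left]0,1\right[$ forces $\varepsilon(1+d(\xi))\leq 1+\varepsilon d(\xi)$, I obtain the crucial pointwise bound $\varepsilon^{2\sigma}\left\vert\xi^{\beta}\right\vert^{2}\leq C_{0}^{2\sigma}(1+\varepsilon d(\xi))^{2\sigma}$.

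Next I would fix a cut-off $\varphi\in\mathcal{C}_{0}^{\infty}(B_{1})$ with $\varphi\equiv 1$ on $B_{1/2}$, so that $\varphi^{\varepsilon}\equiv 1$ on $B_{\varepsilon/2}$. As $P$ is hypoelliptic and $Pu=0$, $u$ is smooth, and on the open ball $B_{\varepsilon/2}$ the function $\varphi^{\varepsilon}u$ coincides with $u$ together with all its derivatives; since the constant-coefficient operator $P^{(\alpha)}(D)D^{\beta}=D^{\beta}P^{(\alpha)}(D)$ is local, its actions on $\varphi^{\varepsilon}u$ and on $u$ agree there. Extending the domain of integration and applying Plancherel then gives
\[
\int_{B_{\varepsilon/2}}\left\vert P^{(\alpha)}(D)D^{\beta}u\right\vert^{2}dx\leq\int_{\mathbb{R}^{n}}\left\vert D^{\beta}P^{(\alpha)}(D)(\varphi^{\varepsilon}u)\right\vert^{2}dx=\int_{\mathbb{R}^{n}}\left\vert\xi^{\beta}\right\vert^{2}\left\vert\widehat{P^{(\alpha)}(D)(\varphi^{\varepsilon}u)}(\xi)\right\vert^{2}d\xi .
\]
Multiplying by $\varepsilon^{2\sigma}$ and inserting the weight bound from the first step, the right-hand side is dominated by $C_{0}^{2\sigma}\vert\vert\vert P^{(\alpha)}(D)(\varphi^{\varepsilon}u)\vert\vert\vert_{2\sigma,\varepsilon}^{2}$.

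Finally I would sum over $\alpha\neq 0$ against the weights $\varepsilon^{-2\left\vert\alpha\right\vert}$ and apply the preceding lemma with $s=2\sigma$, which is an integer $\geq 1$ because $\sigma\geq 1$; this bounds $\sum_{\alpha\neq 0}\varepsilon^{-2\left\vert\alpha\right\vert}\vert\vert\vert P^{(\alpha)}(D)(\varphi^{\varepsilon}u)\vert\vert\vert_{2\sigma,\varepsilon}^{2}$ by $C'\sum_{\alpha\neq 0}\varepsilon^{-2\left\vert\alpha\right\vert}\int_{B_{\varepsilon}}\left\vert P^{(\alpha)}(D)u\right\vert^{2}dx$ with $C'$ independent of $\varepsilon$ and $u$, yielding the claim with $C=C_{0}^{2\sigma}C'$. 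I expect the only delicate points to be the justification that $\varphi^{\varepsilon}u$ may replace $u$ inside $B_{\varepsilon/2}$ \emph{uniformly} in $\varepsilon$ (which is precisely why the cut-off is taken in the fixed ball $B_{1}$ and rescaled), and the bookkeeping that matches the weight exponent to the value $s=2\sigma$ required by the previous lemma; the remaining steps are routine Plancherel and summation.
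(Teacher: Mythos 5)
Your proposal is correct and takes essentially the same approach as the paper's proof: the weight estimate $\varepsilon^{\sigma}\left\vert \xi ^{\beta }\right\vert \leq C^{\sigma }\left( 1+\varepsilon d\left( \xi \right) \right) ^{\sigma }$ derived from property (1) of the polyhedron of hypoellipticity, Plancherel applied to $v=P^{\left( \alpha \right) }\left( D\right) \left( \varphi ^{\varepsilon }u\right) $, summation against $\varepsilon ^{-2\left\vert \alpha \right\vert }$, the preceding lemma, and the identity $\varphi ^{\varepsilon }u=u$ on $B_{\frac{\varepsilon }{2}}$. The only divergence is bookkeeping: you take the norm exponent $s=2\sigma $, which is the value consistent with the paper's stated definition of $\left\vert \left\vert \left\vert \cdot \right\vert \right\vert \right\vert _{s,\varepsilon }$, whereas the paper writes $s=\sigma $ at the corresponding step, so your accounting is if anything the more careful one.
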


\begin{proof}
Let $\beta $ $\in \mathbb{Z}_{+}^{n}\cap \sigma \mathcal{H}$, from
$(1)$ of
definition $4.1$, we have%
\begin{equation*}
\left\vert \xi ^{\beta }\right\vert \leq C^{\sigma }\left(
1+d\left( \xi \right) \right) ^{\sigma },
\end{equation*}%
hence $\exists C>0$, $\forall \varepsilon \in \left] 0,1\right[ ,$
$\forall \xi \in \mathbb{R}^{n},$
\begin{equation}
\varepsilon ^{\sigma }\left\vert \xi ^{\beta }\right\vert \leq
C^{\sigma }d_{\sigma ,\varepsilon }\left( \xi \right)
\end{equation}%
Multiplying $(4.2)$ by $\left( 2\pi \right) ^{-n}\left\vert \widehat{v}%
\left( \xi \right) \right\vert $ and integrating with respect to
$\xi ,$ we
obtain%
\begin{equation}
\varepsilon ^{2\sigma }\int \left\vert D^{\beta }v\right\vert
^{2}dx\leq C^{2}\left\vert \left\vert \left\vert v\right\vert
\right\vert \right\vert _{\sigma ,\varepsilon }^{2}
\end{equation}%
Let $\varphi \in C_{0}^{\infty }\left( B_{1}\right) $ equals $1$ in $B_{%
\frac{1}{2}}$ and apply the estimate $(4.3)$ to $v=P^{\left(
\alpha \right)
}\left( D\right) \left( \varphi ^{\varepsilon }u\right) ,$ then%
\begin{equation*}
\varepsilon ^{2\sigma }\int \left\vert P^{\left( \alpha \right)
}\left( D\right) D^{\beta }\left( \varphi ^{\varepsilon }u\right)
\right\vert ^{2}dx\leq C^{2}\left\vert \left\vert \left\vert
P^{\left( \alpha \right) }\left( D\right) \left( \varphi
^{\varepsilon }u\right) \right\vert \right\vert \right\vert
_{\sigma ,\varepsilon }^{2}
\end{equation*}%
\begin{equation*}
\varepsilon ^{2\sigma }\underset{\alpha \neq 0}{\sum }\varepsilon
^{-2\left\vert \alpha \right\vert }\int \left\vert P^{\left(
\alpha \right) }\left( D\right) D^{\beta }\left( \varphi
^{\varepsilon }u\right) \right\vert ^{2}dx\leq
C^{2}\underset{\alpha \neq 0}{\sum }\varepsilon ^{-2\left\vert
\alpha \right\vert }\left\vert \left\vert \left\vert P^{\left(
\alpha \right) }\left( D\right) \left( \varphi ^{\varepsilon
}u\right) \right\vert \right\vert \right\vert _{\sigma
,\varepsilon }^{2},
\end{equation*}%
consequently lemma 4.6 gives
\begin{equation*}
\varepsilon ^{2\sigma }\underset{\alpha \neq 0}{\sum }\varepsilon
^{-2\left\vert \alpha \right\vert }\int \left\vert P^{\left(
\alpha \right) }\left( D\right) D^{\beta }\left( \varphi
^{\varepsilon }u\right) \right\vert ^{2}dx\leq C\underset{\alpha
\neq 0}{\sum }\varepsilon
^{-2\left\vert \alpha \right\vert }\underset{B_{\varepsilon }}{\int }%
\left\vert P^{\left( \alpha \right) }\left( D\right) \left(
u\right) \right\vert ^{2}dx
\end{equation*}%
As $\varphi ^{\varepsilon }\left( x\right) =\varphi \left( \frac{x}{%
\varepsilon }\right) =1$ in $B_{\frac{\varepsilon }{2}},$ then%
\begin{equation*}
\varepsilon ^{2\sigma }\underset{\alpha \neq 0}{\sum }\varepsilon
^{-2\left\vert \alpha \right\vert }\underset{B_{\frac{\varepsilon }{2}}}{%
\int }\left\vert P^{\left( \alpha \right) }\left( D\right)
D^{\beta }u\right\vert ^{2}dx\leq C\underset{\alpha \neq 0}{\sum
}\varepsilon
^{-2\left\vert \alpha \right\vert }\underset{B_{\varepsilon }}{\int }%
\left\vert P^{\left( \alpha \right) }\left( D\right) \left(
u\right) \right\vert ^{2}dx
\end{equation*}
\end{proof}

\begin{proposition}
Let $\Omega $ be a bounded open set in $\mathbb{R}^{n}$ and $\beta
$ $\in
\mathbb{Z}_{+}^{n}\cap \sigma \mathcal{H}$, then there exists a constant $%
C>0,$ such that for every $u$ solution of $Pu=0$ in $\Omega $ and
$\delta \in \left] 0,1\right[ ,$ we have
\begin{equation*}
\underset{\alpha \neq 0}{\sum }\delta ^{-2\left\vert \alpha
\right\vert }\int_{\Omega _{\delta }}\left\vert P^{\left( \alpha
\right) }\left(
D\right) D^{\beta }u\right\vert ^{2}dx\leq C\delta ^{-2\sigma }\underset{%
\alpha \neq 0}{\sum }\delta ^{-2\left\vert \alpha \right\vert
}\int_{\Omega }\left\vert P^{\left( \alpha \right) }\left(
D\right) u\right\vert ^{2}dx,
\end{equation*}%
where
\begin{equation*}
\Omega _{\delta }=\left\{ x\in \Omega :dist\left( x,\partial
\Omega \right)
>\delta \right\}
\end{equation*}
\end{proposition}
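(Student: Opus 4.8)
The plan is to globalize the local ball estimate of the preceding lemma (Lemma 4.7) by a Vitali-type covering argument; the analytic work has already been done, so what remains is geometric bookkeeping. First I would record that Lemma 4.7 is translation invariant: since $P$ has constant coefficients and the distance-to-zero-set function $d(\xi)$ is independent of the base point, the same constant $C$ serves for the ball $B(x_0,\varepsilon)=\{x:\left\vert x-x_0\right\vert<\varepsilon\}$ centered at an arbitrary $x_0$. Hence, for any $u$ with $Pu=0$ on $B(x_0,\delta)$, choosing $\varepsilon=\delta$ and dividing the conclusion of that lemma by $\delta^{2\sigma}$ gives
\begin{equation*}
\sum_{\alpha\neq 0}\delta^{-2\left\vert\alpha\right\vert}\int_{B(x_0,\delta/2)}\left\vert P^{(\alpha)}(D)D^{\beta}u\right\vert^2 dx\leq C\delta^{-2\sigma}\sum_{\alpha\neq 0}\delta^{-2\left\vert\alpha\right\vert}\int_{B(x_0,\delta)}\left\vert P^{(\alpha)}(D)u\right\vert^2 dx.
\end{equation*}

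Next I would construct an efficient covering of $\Omega_\delta$ by such half-balls. Choose a maximal family of points $\{x_i\}\subset\Omega_\delta$ for which the balls $B(x_i,\delta/4)$ are pairwise disjoint. By maximality the half-balls $B(x_i,\delta/2)$ cover $\Omega_\delta$, while each doubled ball $B(x_i,\delta)$ lies in $\Omega$, since $\operatorname{dist}(x_i,\partial\Omega)>\delta$ for $x_i\in\Omega_\delta$. The decisive point is that the family $\{B(x_i,\delta)\}$ has overlap bounded by a purely dimensional constant $N=N(n)$: if a point $y$ belongs to $B(x_i,\delta)$ for the indices $i$ in some set $I_y$, then all the disjoint balls $B(x_i,\delta/4)$ with $i\in I_y$ sit inside $B(y,5\delta/4)$, and comparing volumes yields $\#I_y\leq 5^n$.

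Then I would sum the translated estimate over the cover. Since $\Omega_\delta\subset\bigcup_i B(x_i,\delta/2)$,
\begin{equation*}
\sum_{\alpha\neq 0}\delta^{-2\left\vert\alpha\right\vert}\int_{\Omega_\delta}\left\vert P^{(\alpha)}(D)D^{\beta}u\right\vert^2 dx\leq\sum_i\sum_{\alpha\neq 0}\delta^{-2\left\vert\alpha\right\vert}\int_{B(x_i,\delta/2)}\left\vert P^{(\alpha)}(D)D^{\beta}u\right\vert^2 dx,
\end{equation*}
and applying Lemma 4.7 ball by ball bounds the right-hand side by $C\delta^{-2\sigma}\sum_i\sum_{\alpha\neq 0}\delta^{-2|\alpha|}\int_{B(x_i,\delta)}|P^{(\alpha)}(D)u|^2\,dx$. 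The bounded overlap $\sum_i\mathbf{1}_{B(x_i,\delta)}\leq N$ together with $B(x_i,\delta)\subset\Omega$ then shows that each integral over $\Omega$ is counted at most $N$ times, so the sum over $i$ is controlled by $N\sum_{\alpha\neq 0}\delta^{-2|\alpha|}\int_{\Omega}|P^{(\alpha)}(D)u|^2\,dx$, and absorbing $CN$ into a new constant $C$ yields the claim.

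I expect the main obstacle to be exactly this combinatorial covering step: producing a cover of $\Omega_\delta$ by half-balls whose doublings remain inside $\Omega$ while having overlap controlled by a constant depending only on $n$, so that this constant can be swallowed by $C$ without disturbing the $\delta^{-2\sigma}$ scaling. The essential analytic gain, in particular the factor $\delta^{-2\sigma}$, is already supplied by Lemma 4.7; the remainder is the passage from many balls to the single region $\Omega_\delta$.
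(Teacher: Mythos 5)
Your proposal is correct and is essentially the paper's own argument: the paper proves this proposition by invoking the preceding ball lemma together with the covering reasoning of Theorem 4.4.2 in H\"{o}rmander's book, which is exactly the translation-invariance plus bounded-overlap covering scheme you carry out. The only difference is that you make the Vitali-type covering and the $5^{n}$ overlap bound explicit, whereas the paper leaves these details to the cited reference.
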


\begin{proof}
The proof is obtained from the precedent lemma and follows the
same reasoning as the proof of theorem 4.4.2 of \cite{HOR2}.
\end{proof}

\begin{corollary}
Let $P\left( D\right) $ an hypoelliptic operator, then $\exists
C>0$ such that for every solution of $P\left( D\right) u=0$ in
$\Omega $,$\forall \varepsilon \in \left] 0,1\right[ ,$\ $\forall
j=1,2,...,$ we have
\begin{equation}
\varepsilon ^{2\sigma }\underset{0\neq \alpha \in \mathbb{N}_{0}^{n}}{\sum }%
\varepsilon ^{-2\left\vert \alpha \right\vert }\left\Vert Q_{\mathcal{H}%
}\left( D\right) P^{\left( \alpha \right) }\left( D\right)
u\right\Vert _{L^{2}\left( \Omega _{\varepsilon j}\right)
}^{2}\leq C\underset{0\neq \alpha \in \mathbb{N}_{0}^{n}}{\sum
}\varepsilon ^{-2\left\vert \alpha \right\vert }\left\Vert
P^{\left( \alpha \right) }\left( D\right) u\right\Vert
_{L^{2}\left( \Omega _{\mathbb{\varepsilon }\left( j-1\right)
}\right) }^{2}
\end{equation}
\end{corollary}

The principal result of this section is the following theorem.

\begin{theorem}
Let $u$ be a solution of the hypoelliptic equation $P\left(
D\right) u=0$ in $\Omega $, then for every $\omega \subset \subset
\Omega ,$ there is a constant $C>0$, such that $\forall j\in
\mathbb{N}$, we have
\begin{equation}
\left\Vert Q_{\mathcal{H}}^{j}\left( D\right) u\right\Vert
_{L^{2}\left( \omega \right) }\leq C^{\left( j+1\right) }j^{\sigma
j}
\end{equation}
\end{theorem}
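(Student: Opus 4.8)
The plan is to iterate the preceding Corollary, using that $Q_{\mathcal{H}}(D)$ has constant coefficients and therefore commutes with $P(D)$. First I would observe that if $P(D)u=0$, then $P(D)Q_{\mathcal{H}}^{k}(D)u=Q_{\mathcal{H}}^{k}(D)P(D)u=0$, so every iterate $Q_{\mathcal{H}}^{k}(D)u$ is again a solution of the equation in $\Omega$ (and is smooth, $P$ being hypoelliptic). To keep all quantities finite I would fix an intermediate bounded open set $\omega\subset\subset\Omega'\subset\subset\Omega$ on whose closure $u$ and all its derivatives are bounded, write $\Omega'_{\delta}:=\{x\in\Omega':\mathrm{dist}(x,\partial\Omega')>\delta\}$, and run the Corollary with $\Omega'$ in place of $\Omega$. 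Setting
\[
F_k^{(m)}:=\sum_{0\neq\alpha\in\mathbb{N}_0^n}\varepsilon^{-2|\alpha|}\big\|P^{(\alpha)}(D)Q_{\mathcal{H}}^{k}(D)u\big\|_{L^2(\Omega'_{\varepsilon m})}^2 ,
\]
the Corollary applied to the solution $v=Q_{\mathcal{H}}^{k}(D)u$ with domain index $m$ reads $\varepsilon^{2\sigma}F_{k+1}^{(m)}\le C\,F_k^{(m-1)}$, with one and the same constant $C$, independent of $\varepsilon$ and of the solution. This is exactly the telescoping structure I need.

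Next I would iterate along the diagonal,
\[
F_j^{(j)}\le C\varepsilon^{-2\sigma}F_{j-1}^{(j-1)}\le\cdots\le (C\varepsilon^{-2\sigma})^{j}F_0^{(0)},
\]
the nested domains $\Omega'_{\varepsilon j}\subset\cdots\subset\Omega'_{\varepsilon}\subset\Omega'$ matching up at each step. Since $P$ is a polynomial, only finitely many $P^{(\alpha)}$ are nonzero (those with $|\alpha|\le N:=\deg P$), so for $\varepsilon\in(0,1)$ one has $F_0^{(0)}\le\varepsilon^{-2N}M_u$, where $M_u:=\sum_{0\neq\alpha}\|P^{(\alpha)}(D)u\|_{L^2(\Omega')}^2<\infty$ by the choice of $\Omega'$. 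This yields $F_j^{(j)}\le C^{j}\varepsilon^{-2\sigma j}\varepsilon^{-2N}M_u$.

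The decisive point is to recover $\|Q_{\mathcal{H}}^{j}(D)u\|$ itself from $F_j^{(j)}$, whose terms are the mixed norms $\|P^{(\alpha)}(D)Q_{\mathcal{H}}^{j}(D)u\|$. I would do this through the top-order symbol derivative: choosing a monomial $a_\gamma\xi^\gamma$ of $P$ of maximal degree $|\gamma|=N\ge1$, one has $P^{(\gamma)}(\xi)=a_\gamma\gamma!=:c\neq0$, a nonzero constant with $\gamma\neq0$, so the single summand $\alpha=\gamma$ already bounds $F_j^{(j)}$ from below by $\varepsilon^{-2N}|c|^2\|Q_{\mathcal{H}}^{j}(D)u\|_{L^2(\Omega'_{\varepsilon j})}^2$. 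Comparing with the upper bound, the two factors $\varepsilon^{-2N}$ cancel and I obtain
\[
\|Q_{\mathcal{H}}^{j}(D)u\|_{L^2(\Omega'_{\varepsilon j})}^2\le |c|^{-2}M_u\,C^{j}\varepsilon^{-2\sigma j}.
\]

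Finally I would optimise the cutoff scale. Fix $c_0$ with $0<c_0<\min\big(1,\mathrm{dist}(\omega,\partial\Omega')\big)$ and take $\varepsilon=c_0/j$; then $\varepsilon\in(0,1)$, while $\varepsilon j=c_0<\mathrm{dist}(\omega,\partial\Omega')$ forces $\omega\subset\Omega'_{\varepsilon j}$ and $\varepsilon^{-2\sigma j}=c_0^{-2\sigma j}j^{2\sigma j}$. Taking square roots gives $\|Q_{\mathcal{H}}^{j}(D)u\|_{L^2(\omega)}\le B A^{j}j^{\sigma j}$ for suitable $A,B>0$, and $C:=\max(A,B)$ produces the claimed bound $C^{(j+1)}j^{\sigma j}$, the case $j=0$ being trivial. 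The main obstacle is the bookkeeping: aligning the domain indices so that the telescope closes with a single constant, checking that the two powers $\varepsilon^{-2N}$ cancel exactly so that only $\varepsilon^{-2\sigma j}$ survives, and realising that the norm of $Q_{\mathcal{H}}^{j}(D)u$ is controlled by the one summand whose symbol $P^{(\gamma)}$ is a nonzero constant. Once these are settled, the substitution $\varepsilon\sim1/j$ is precisely what converts the blow-up $\varepsilon^{-2\sigma j}$ into the Gevrey factor $j^{\sigma j}$.
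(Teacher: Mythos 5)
Your proposal is correct and follows essentially the same route as the paper: both iterate Corollary 4.10 $j$ times (your explicit telescoping of $F_j^{(j)}$ is just the unrolled form of the paper's induction on the quantity $\varepsilon^{2j\sigma+2m}\sum_{\alpha\neq 0}\varepsilon^{-2|\alpha|}\Vert Q_{\mathcal{H}}^{j}P^{(\alpha)}u\Vert^{2}_{L^{2}(\Omega_{j\varepsilon})}$), both recover $\Vert Q_{\mathcal{H}}^{j}u\Vert$ from the top-order term where $P^{(\alpha)}$ is a nonzero constant, and both conclude by taking $\varepsilon\sim 1/j$. Your only departures are cosmetic refinements — working on an intermediate relatively compact $\Omega'$ to guarantee finiteness of the starting quantity, and cancelling the $\varepsilon^{-2N}$ factors explicitly where the paper carries $\varepsilon^{2m}$ inside the induction.
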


\begin{proof}
Since $\rho =\rho \left( \omega ,\partial \Omega \right) >0,$ then
there exists $\delta \in \left] 0,\rho \right[ $ such that $\omega
\subset \Omega _{\delta }\subset \Omega .$ Take $\varepsilon
=\dfrac{\delta }{j},$ $j\in \mathbb{N},$ and let us show by
induction on $j$ the following estimate
\begin{equation}
\varepsilon ^{2j\sigma +2m}\underset{0\neq \alpha \in \mathbb{N}_{0}^{n}}{%
\sum }\varepsilon ^{-2\left\vert \alpha \right\vert }\left\Vert \left( Q_{%
\mathcal{H}}^{j}\left( D\right) P^{\left( \alpha \right) }\left(
D\right) u\right) \right\Vert _{L^{2}\left( \Omega _{j\varepsilon
}\right) }^{2}<C^{2\left( j+1\right) },
\end{equation}%
where $m$ is the order of $P$.

As every solution $u$ of an hypoelliptic equation is $\mathcal{C}^{\infty }$%
, then there exists $C>0$ such that $(4.6)$ is satisfied for
$j=0.$ Suppose that $(4.6)$ is true for $j\leq l$ $(l\geq 0),$ we
have to prove that it remains true for $j=l+1.$ Since
$v=Q_{\mathcal{H}}^{l}\left( D\right) u$ is also a solution of
equation $P\left( D\right) u=0$, then from corollary 4.10, we
obtain
\begin{eqnarray}
&&\varepsilon ^{2\sigma \left( l+1\right) +2m}\underset{0\neq \alpha }{\sum }%
\varepsilon ^{-2\left\vert \alpha \right\vert }\left\Vert \left( Q_{\mathcal{%
H}}^{l+1}\left( D\right) P^{\left( \alpha \right) }\left( D\right)
u\right) \right\Vert _{L^{2}\left( \Omega _{\varepsilon \left(
l+1\right) }\right)
}^{2}  \notag \\
&\leq &C\varepsilon ^{2\sigma l+2m}\underset{0\neq \alpha }{\sum }%
\varepsilon ^{-2\left\vert \alpha \right\vert }\left\Vert
P^{\left( \alpha \right) }\left( D\right)
Q_{\mathcal{H}}^{l}\left( D\right) u\right\Vert _{L^{2}\left(
\Omega _{\varepsilon l}\right) }^{2}
\end{eqnarray}%
By the induction hypothesis, we have
\begin{equation*}
\varepsilon ^{2\sigma l+2m}\underset{0\neq \alpha }{\sum
}\varepsilon ^{-2\left\vert \alpha \right\vert }\left\Vert
P^{\left( \alpha \right) }\left( D\right)
Q_{\mathcal{H}}^{l}\left( D\right) u\right\Vert _{L^{2}\left(
\Omega _{\varepsilon l}\right) }^{2}\leq C_{1}^{2\left( l+1\right)
}\;,
\end{equation*}%
consequently, we obtain
\begin{equation*}
\varepsilon ^{2\sigma \left( l+1\right) +2m}\underset{0\neq \alpha }{\sum }%
\varepsilon ^{-2\left\vert \alpha \right\vert }\left\Vert \left( Q_{\mathcal{%
H}}^{l+1}\left( D\right) P^{\left( \alpha \right) }\left( D\right)
u\right) \right\Vert _{L^{2}\left( \Omega _{\varepsilon \left(
l+1\right) }\right) }^{2}\leq C_{2}^{2\left( l+2\right) },
\end{equation*}%
hence $\forall j\in \mathbb{N}$, we have
\begin{equation}
\varepsilon ^{2\sigma j+2m}\underset{0\neq \alpha }{\sum
}\varepsilon ^{-2\left\vert \alpha \right\vert }\left\Vert
P^{\left( \alpha \right) }\left( D\right)
Q_{\mathcal{H}}^{j}\left( D\right) u\right\Vert _{L^{2}\left(
\Omega _{\varepsilon j}\right) }^{2}\leq C_{2}^{2\left( j+1\right)
}
\end{equation}%
The estimate $\left( 4.8\right) $ with $\left\vert \alpha
\right\vert =m$
gives $\forall j\in \mathbb{N},$%
\begin{equation*}
\left\Vert Q_{\mathcal{H}}^{j}\left( D\right) u\right\Vert
_{L^{2}\left( \Omega _{\varepsilon j}\right) }^{2}\leq \varepsilon
^{-2\sigma j}C_{2}^{2\left( j+1\right) },
\end{equation*}%
as $\varepsilon =\dfrac{\delta }{j}$, then
\begin{equation*}
\left\Vert Q_{\mathcal{H}}^{j}\left( D\right) u\right\Vert
_{L_{{}}^{2}\left( \Omega _{\varepsilon j}\right) }^{2}\leq \left( \frac{j}{%
\delta }\right) ^{2\sigma j}C_{2}^{2\left( j+1\right) }\leq
C^{2\left( j+1\right) }j^{2\sigma j},
\end{equation*}%
hence
\begin{equation*}
\left\Vert Q_{\mathcal{H}}^{j}\left( D\right) u\right\Vert
_{L^{2}\left( \Omega _{j\varepsilon }\right) }\leq C^{\left(
j+1\right) }j^{\sigma j}
\end{equation*}
\end{proof}

We denote $G^{s,\text{ }\mathcal{H}}\left( \Omega \right) $ the
multi-anisotropic Gevrey space associated with $\mathcal{H}$ and by $\mu _{%
\mathcal{H}}$ and $\mu _{Q}$ the respective formal orders of the
Newton's polyhedrons $\mathcal{H}$ and $\Gamma \left(
Q_{\mathcal{H}}\right) $, then we have the following relations
\begin{equation*}
\Gamma \left( Q_{\mathcal{H}}\right) =\sigma \mathcal{H}\text{ \
and \ }\mu _{Q}=\sigma \mu _{\mathcal{H}}
\end{equation*}

The principal result of this paper is the following theorem.

\begin{theorem}
Every solution $u\in \mathcal{D}^{\prime }\left( \Omega \right) $
of the
hypoelliptic equation $P\left( D\right) u=0$ is a function of $G^{\frac{%
\sigma }{\mu _{\mathcal{H}}},\;\mathcal{H}}\left( \Omega \right)
.$
\end{theorem}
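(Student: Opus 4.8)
The plan is to recognize the solution $u$ as a Gevrey vector of the auxiliary multi-quasielliptic operator $Q_{\mathcal{H}}$, and then to convert this information, via the equivalence between Gevrey vectors and multi-anisotropic Gevrey functions, into membership in the space attached to $\mathcal{H}$. First I would use hypoellipticity: from $P(D)u=0$ and the hypoellipticity of $P$ we obtain $u\in \mathcal{C}^{\infty }(\Omega )$, so that all the iterate norms below are finite and the Gevrey machinery applies.

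The core input is the preceding theorem, which furnishes, for each $\omega \subset \subset \Omega $, a constant $C>0$ with
\begin{equation*}
\left\Vert Q_{\mathcal{H}}^{j}(D)u\right\Vert _{L^{2}(\omega )}\leq C^{j+1}j^{\sigma j},\qquad \forall j\in \mathbb{N}.
\end{equation*}
To match the $L^{\infty }$ formulation of the Gevrey-vector definition I would first upgrade this to an $L^{\infty }$ bound on a slightly smaller set. Since $Q_{\mathcal{H}}$ is multi-quasielliptic, hence hypoelliptic, one controls finitely many derivatives of $Q_{\mathcal{H}}^{j}u$ by the $L^{2}$ norms of $Q_{\mathcal{H}}^{j}u$ and of $Q_{\mathcal{H}}^{j+1}u$ on a larger ball; a Sobolev embedding then gives $\left\Vert Q_{\mathcal{H}}^{j}u\right\Vert _{L^{\infty }(\omega ^{\prime })}\leq C^{j+1}j^{\sigma j}$, and, by the remark allowing the factor $l^{sl\mu }$ in place of $(l!)^{s\mu }$ in the definition of Gevrey vectors, this is precisely a bound of Gevrey-vector type for $Q_{\mathcal{H}}$.

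It then remains to read off the index and translate spaces. By the recalled equivalence, for the multi-quasielliptic operator $Q_{\mathcal{H}}$ one has $G^{s}(\Omega ,Q_{\mathcal{H}})=G^{s,\,\Gamma (Q_{\mathcal{H}})}(\Omega )$, and by the relations noted before the statement $\Gamma (Q_{\mathcal{H}})=\sigma \mathcal{H}$ and $\mu _{Q}=\sigma \mu _{\mathcal{H}}$. Comparing the exponent $\sigma j$ produced by the preceding theorem with the Gevrey-vector exponent governed by $\mu _{Q}$, one selects the index $s=\sigma /\mu _{\mathcal{H}}$; expressing the resulting space $G^{s,\,\sigma \mathcal{H}}(\Omega )$ back in terms of the polyhedron $\mathcal{H}$ itself then yields $u\in G^{\sigma /\mu _{\mathcal{H}},\,\mathcal{H}}(\Omega )$, which is the claim.

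The step I expect to be the main obstacle is not the analysis but the bookkeeping of the Gevrey index across the two normalizations: one must keep straight the factor $\sigma $ coming from $j^{\sigma j}$, the formal order $\mu _{Q}=\sigma \mu _{\mathcal{H}}$ of the operator actually used, and the rescaling $k(\alpha ,\sigma \mathcal{H})=\sigma ^{-1}k(\alpha ,\mathcal{H})$ of the weight, so that the exponent lands exactly on $\sigma /\mu _{\mathcal{H}}$ relative to $\mathcal{H}$. The $L^{2}\to L^{\infty }$ upgrade and the appeal to the equivalence theorem are, by contrast, routine.
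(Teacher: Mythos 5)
Your proposal is correct and follows essentially the same route as the paper's own proof: the preceding theorem makes $u$ a Gevrey vector of the multi-quasielliptic operator $Q_{\mathcal{H}}$, the equivalence theorem of Section 3 converts this into $u\in G^{\sigma/\mu_{Q},\,\Gamma(Q_{\mathcal{H}})}(\Omega)=G^{1/\mu_{\mathcal{H}},\,\sigma\mathcal{H}}(\Omega)$, and the identity $G^{s,\,\sigma\mathcal{H}}(\Omega)=G^{s\sigma,\,\mathcal{H}}(\Omega)$ yields the claim; you even fill in the $L^{2}$-to-$L^{\infty}$ upgrade that the paper leaves implicit. One slip of wording only: the Gevrey-vector index you ``select'' is $s=\sigma/\mu_{Q}=1/\mu_{\mathcal{H}}$ (relative to $\sigma\mathcal{H}$), not $\sigma/\mu_{\mathcal{H}}$; the extra factor $\sigma$ appears only after the rescaling to $\mathcal{H}$, exactly as the bookkeeping in your final paragraph correctly indicates.
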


\begin{proof}
The theorem 4.11 says that every $u$ solution of the hypoelliptic equation $%
Pu=0$ is a Gevrey vector of the operator $Q_{\mathcal{H}}$, i.e. we have $%
u\in G^{\frac{\sigma }{\mu _{Q}}}\left( \Omega
,Q_{\mathcal{H}}\right) .$ From theorem 3.4 and as the operator
$Q_{\mathcal{H}}$ is multi-quasielliptic, then we have $u\in
G^{\frac{\sigma }{\mu _{Q}},\;\Gamma
\left( Q_{\mathcal{H}}\right) }\left( \Omega \right) ,$ and consequently $%
u\in G^{\frac{1}{\mu _{\mathcal{H}}},\;\sigma \mathcal{H}}\left(
\Omega \right) .$ A simple computation shows that in general
$G^{s,\;\sigma \mathcal{H}}\left( \Omega \right) =G^{s\sigma
,\;\mathcal{H}}\Omega ,$ hence $u\in G^{\frac{\sigma }{\mu
_{\mathcal{H}}},\;\mathcal{H}}\left( \Omega \right) .$
\end{proof}

\begin{remark}
It is interesting to compare the result of the theorem with the
microlocal Gevrey regularity result obtained in \cite{BC3}.
\end{remark}

\end{document}